\documentclass[a4paper,12pt]{article}

\usepackage{enumitem}
\usepackage[utf8]{inputenc}
\usepackage[english]{babel}
\usepackage{csquotes}
\usepackage{graphicx}
\usepackage{amsmath, amsthm, amssymb}
\usepackage{marvosym}



\newcommand{\hhookrightarrow}{\lhook\mkern-3mu\relbar\mkern-12mu\hookrightarrow}

\newtheorem{theorem}{Theorem}[section]
\theoremstyle{definition} 
\newtheorem{definition}[theorem]{Definition} 
\theoremstyle{definition} 
\newtheorem{lemma}[theorem]{Lemma} 
\theoremstyle{definition} 
\newtheorem{remark}[theorem]{Remark}
\theoremstyle{definition} 
 
\theoremstyle{definition}
 
\theoremstyle{definition} 
\newtheorem{assumption}[theorem]{Assumption} 
\theoremstyle{definition} 

\setlength{\voffset}{-28.4mm}
\setlength{\hoffset}{-1in}
\setlength{\topmargin}{20mm}
\setlength{\oddsidemargin}{25mm}
\setlength{\evensidemargin}{25mm}
\setlength{\textwidth}{160mm}

\setlength{\parindent}{0pt}

\setlength{\textheight}{235mm}
\setlength{\footskip}{20mm}
\setlength{\headsep}{50pt}
\setlength{\headheight}{0pt}

\title{Global existence and Hadamard differentiability of hysteresis-reaction-diffusion systems}
\author{Christian Münch\footnote{Department of Mathematics - M6, Technical University of Munich, Boltzmannstr. 3, 85747 Garching, Germany. christian.muench@ma.tum.de}}

\usepackage{hyperref}

\begin{document}
\maketitle

%
%

\pagenumbering{arabic}  

\begin{abstract}
	
	We consider a class of semilinear parabolic evolution equations subject to a hysteresis operator 
	and a Bochner-Lebesgue integrable source term.
	The underlying spatial domain is allowed to have a very general boundary.
	In the first part of the paper, we apply semigroup theory to prove well-posedness and boundedness of the solution operator.
	Rate independence in reaction-diffusion systems complicates the analysis, since the reaction term acts no longer local in time.
	This demands careful estimates when working with semigroup methods.
	In the second part, we show Lipschitz continuity and Hadamard differentiability of the solution operator.
	We use fixed point arguments to derive a representation for the derivative in terms of the evolution system.
	Finally, we apply our results to an optimal control problem in which the source term acts as a control function
	and show existence of an optimal solution.

%
%
\end{abstract}

\begin{center}
Keywords: Hysteresis operator, stop operator, global existence, semilinear parabolic evolution problem, solution operator, Hadamard differentiability, reaction-diffusion.

MSC subject class: 47J40, 35K51
\end{center}

\section{Introduction}

In this paper we analyze semilinear parabolic evolution equations of the form
\begin{alignat}{2}
\frac{d}{dt}y(t) + (A_p y)(t) &= (F[y])(t) + u(t)\ && \text{ in } X \text{ for } t>0,\label{state_eqation_abstract}\\
y(0)&=0\in X.\notag
\end{alignat}
In this context $X$ is a product of dual spaces and $A_p$ is an unbounded operator on $X$. The non-linearity $F$ is a Nemytski operator, i.e. $(F[y])(t)=f(y(t),\mathcal{W}[S y](t))$.
$S$ is a linear operator which transforms the vector valued function $y$ into a scalar valued map. $\mathcal{W}$ is a scalar stop operator. 
One way to represent the value of $z=\mathcal{W}[v]$ is as the unique solution of the variational inequality 
\begin{alignat}{2}
(\dot{z}(t)-\dot{v}(t))(z(t)-\xi) &\leq 0 &&\ \text{for } \xi\in [a,b] \text{ and } t\in (0,T),\label{stop1}\\
z(t)&\in [a,b] &&\ \text{for } t\in[0,T],\label{stop2}\\
z(0)&=z_0\label{stop3}
\end{alignat} 
\cite{brokate2013optimal}.
The forcing term $u\in \mathrm{L}^q(J_T;X)$ may for example serve as a control. 
Our choice for the notation in equation~\eqref{state_eqation_abstract} is motivated by the application of our results to optimal control theory.

The major focus of this paper are well-posedness of \eqref{state_eqation_abstract} and Hadamard directional differentiability of the solution operator $G$ which maps each $u$ to the corresponding solution $y$ of \eqref{state_eqation_abstract}.

General semilinear parabolic problems with Lipschitz continuous non-linearities $f(t,y(t))$ and with a forcing term $u(t)$ which is Bochner-Lebesgue integrable have, for instance, been analyzed in \cite{lunardi}. Differentiability of the solution mapping is discussed in \cite{meyeroptimal}. 
Abstract evolution equations with (locally) Lipschitz continuous right-hand sides $f(t,y(t))$ and without an additional forcing term are for instance treated in \cite{henry,pazy} and \cite{lunardi}.
In these cases, the non-linearity $f$ is local in time.

The main novelty of this paper comes from the hysteresis $\mathcal{W}$, which is non-local in time. This adds a new challenge to the question of well-posedness since $\mathcal{W}[Sy](t)$ depends not only on $t$ but on the whole time history of $y$ in $[0,t]$. 
Furthermore, $\mathcal{W}$ is non-smooth so that differentiability of the solution operator to \eqref{state_eqation_abstract} is not clear at all.
Because we can not expect Fréchet differentiability \cite{Brokate_weak_diff}, we turn to the concept of Hadamard directional differentiability.

This work is organized as follows.

%
In Section~\ref{Sec:Results_Lit} we collect results from the literature and state the main assumption.
We do not consider product spaces of $\mathrm{L}^p(\Omega)$-functions for $X$ because we include very general domains $\Omega$. The right side of equation~\eqref{state_eqation_abstract} therefore takes its values only in a product of dual spaces. It is not easy to find a fully elaborated description of the functional setup for our problem. We do our best to provide a precise framework which includes all the required results.

%

%

In Section~\ref{Sec:Well_posed} we show well-posedness of equation~\eqref{state_eqation_abstract} with
$u\in \mathrm{L}^q((0,T);X)$. Theorem~\ref{Thm:state_equ_wellposed} is the first main result of this work.

%

After defining Hadamard directional differentiability, Section~\ref{Sec:Hadam_diff} contains a proof that the solution operator for \eqref{state_eqation_abstract} in $u$ has this property. Theorem~\ref{Thm:Solution_op_Hadamard} is our second main result.

%
%
%

%

In Section~\ref{Sec:Optimal_contr} we apply Theorem~\ref{Thm:state_equ_wellposed} and Theorem~\ref{Thm:Solution_op_Hadamard} to an optimal control problem where the state equation takes the form of \eqref{state_eqation_abstract}.
Existence of an optimal control is shown in Theorem~\ref{Thm:exist_opt_control}.

The results from Section~\ref{Sec:Well_posed} and Section~\ref{Sec:Hadam_diff} are also valid if $A_p$ is replaced by a more general sectorial operator $\mathrm{T}_p$ which does not necessarily have to satisfy maximal parabolic Sobolev regularity. In this case $y$ is a continuous function with values in a fractional power space.
Equation \eqref{state_eqation_abstract} has to be interpreted in the sense of mild solutions then.
The scalar stop operator $\mathcal{W}$ can be replaced by a general hysteresis operator with appropriate properties, cf. Remark~\ref{Rem:generalize_hyst_op}.
In this paper, we focus on the operators $A_p$ and $\mathcal{W}$ in order to give an illustration right away.\\

%
%
%

We write $\mathcal{L}(X,Y)$ for the space of linear operators between spaces $X$ and $Y$ and $\mathcal{L}(X)$ for the space of linear operators on $X$.
We also abbreviate the duality in $X$ by
\[
\langle x,y \rangle_{X^*,X} =
\langle x,y \rangle_{X}.
\]

\section{Preliminaries and assumptions}\label{Sec:Results_Lit}
\subsection{Sobolev spaces including homogeneous Dirichlet boundary conditions}
The setting and the theory of this section is strongly based on results from \cite{rehbergsystems}. We recall several definitions, results and assumptions from this work.
All Sobolev spaces are defined on a bounded domain $\Omega\subset \mathbb{R}^d$ with $d\geq 2$. The boundary regularity is defined in Assumption~\ref{Ass:domain}.

We only consider real valued functions.

For each component $j\in \{1,\cdots, m\}$ of the space of vector valued functions, see Defnition~\ref{Def:Sobolev_space}, the boundary $\partial\Omega$ is decomposed into the corresponding Dirichlet part $\Gamma_{D_j}$ and the Neumann boundary $\Gamma_{N_j}:=\partial\Omega\backslash \Gamma_{D_j}$, see Assumption~\ref{Ass:domain}. The cases $\Gamma_{D_j}=\emptyset$ and $\Gamma_{D_j}=\partial\Omega$ are not excluded \cite[Comment after Definition 2.4]{rehbergsystems} and \cite[Remark 2.2 (iii)]{squareroot_problem}.
The assumed condition on $\Gamma_{D_j}$ requires the definition of an $I$-set where $I\in (0,d]$ \cite[Definition 2.1]{rehbergsystems}.
\begin{definition}
	For $0<I\leq d$ and a closed set $M\subset \mathbb{R}^d$ let $\rho$ denote the restriction of the $I$-dimensional Hausdorff measure $\mathcal{H}_I$ to $M$. Then we call $M$ an $I$-set if there are constants $c_1,c_2>0$ such that
	\begin{align*}
	c_1 r^I \leq \rho\left( B_{\mathbb{R}^d}(x,r)\cap M \right) \leq c_2 r^I
	\end{align*}
for all $x$ in $M$ and $r\in ]0,1[$.
\end{definition}

The assumption on the domain in our setting is the following \cite[Assumption 2.3]{rehbergsystems}:
\begin{assumption}\label{Ass:domain}
	The domain $\Omega\subset\mathbb{R}^d$ is bounded and $\overline{\Omega}$ is a $d$-set.
	
	For $j\in \{1,\cdots, m\}$ the Neumann boundary part $\Gamma_{N_j}\subset \partial\Omega$ is open and $\Gamma_{D_j}=\partial\Omega\backslash{\Gamma_{N_j}}$ is a $(d-1)$-set.
\end{assumption}

\begin{remark}
	As already mentioned in the beginning of this section, note that the cases $\Gamma_{D_j}=\emptyset$ and $\Gamma_{D_j}=\partial\Omega$ are not excluded \cite[Comment after Definition 2.4]{rehbergsystems} and \cite[Remark 2.2 (iii)]{squareroot_problem}.
	Assumption~\ref{Ass:domain} allows for very general domains. For example, $\Omega$ may be a Lipschitz domain and for $j\in \{1,\cdots, m\}$, $\Gamma_{D_j}$ can be a $(d-1)$-dimensional manifold.
\end{remark}

In the same manner as in \cite[Definition 2.4]{rehbergsystems} we define Sobolev spaces which include the Dirichlet boundary conditions for our state equation.

\begin{definition}\label{Def:Sobolev_space}
	Let $U\subset \mathbb{R}^d$ be a domain and $p\in [1,\infty)$. 
	\begin{itemize}
		\item $\mathrm{W}^{1,p}(U)$ denotes the usual Sobolev space of functions $\psi\in \mathrm{L}^p(U)$ whose weak partial derivatives exist in $\mathrm{L}^p(U)$.
		The norm in $\mathrm{W}^{1,p}(U)$ is 
		\[\Vert \psi \Vert_{\mathrm{W}^{1,p}(U)}= \left( \int\limits_U \left( \vert \psi \vert^2 + \sum\limits_{j=1}^d \left\vert \frac{\partial\psi}{\partial x_j} \right\vert^2\right)^{\frac{p}{2}} \,dx\right)^{\frac{1}{p}}.\]
		
		\item For a closed subset $M$ of $\overline{U}$ we define
		\[
		\mathrm{C}_M^\infty(U):=\lbrace \psi|_U:\ \psi\in\mathrm{C}_0^\infty(\mathbb{R}^d),\ \mathrm{supp}(\psi)\cap \mathrm{M}=\emptyset \rbrace
		\]
		and denote by $\mathrm{W}_\mathrm{M}^{1,p}(U)$ the closure of $\mathrm{C}_M^\infty(U)$ in $\mathrm{W}^{1,p}(U)$.
		\item
		For $p>1$ we write $p'$ for the Hölder conjugate of $p$.
		
		The dual space $\left[\mathrm{W}_\mathrm{M}^{1,p'}(U)\right]^*$ of $\mathrm{W}_\mathrm{M}^{1,p'}(U)$ is called $\mathrm{W}_\mathrm{M}^{-1,p}(U)$.
	\end{itemize}
\end{definition}
\begin{remark}
	We stick to the norm which is used in \cite{rehbergsystems} which differs from the usual norm in Sobolev spaces. One reason for this choice is that it simplifies estimates concerning the duality between $\mathrm{W}_\mathrm{M}^{1,p}(U)$ and $\mathrm{W}_\mathrm{M}^{1,p'}(U)$. We may identify a function $\phi\in \mathrm{W}_\mathrm{M}^{1,p}(U)$ with an element in $\mathrm{W}_\mathrm{M}^{-1,p}(U)$ since for any $\psi\in \mathrm{W}_\mathrm{M}^{1,p'}(U)$ the Cauchy Schwarz inequality together with Hölder's inequality yields
	\begin{align*}
		&\int\limits_U 
				\left( 
					\phi \psi + \sum\limits_{j=1}^d \frac{\partial\phi}{\partial x_j} \frac{\partial\psi}{\partial x_j} 
				\right) 
		\,dx\\
		&\leq \int\limits_U 
			\left(
			\vert \phi \vert^2 + \sum\limits_{j=1}^d \left\vert \frac{\partial\phi}{\partial x_j} \right\vert^2
			\right)^{\frac{1}{2}}
			\left(
				\vert \psi \vert^2 + \sum\limits_{j=1}^d \left\vert \frac{\partial\psi}{\partial x_j} \right\vert^2
			\right)^{\frac{1}{2}}
		  \,dx\\
		&\leq \left( \int\limits_U \left( \vert \phi \vert^2 + \sum\limits_{j=1}^d \left\vert \frac{\partial\phi}{\partial x_j} \right\vert^2\right)^{\frac{p}{2}} \,dx\right)^{\frac{1}{p}}
		\left( \int\limits_U \left( \vert \psi \vert^2 + \sum\limits_{j=1}^d \left\vert \frac{\partial\psi}{\partial x_j} \right\vert^2\right)^{\frac{p'}{2}} \,dx\right)^{\frac{1}{p'}}.
	\end{align*}
\end{remark}

We need the following assumption for each of the $m$ components \cite[Assumption 4.11]{rehbergsystems}:
\begin{assumption}\label{Ass:existence_extension}
	In the setting of Assumption \ref{Ass:domain} we suppose for all $j\in \{1,\cdots,m\}$ and any $x\in \overline{\Gamma_{N_j}}$ that there is an open neighborhood $U_x$ of $x$ and a bi-Lipschitz mapping $\phi_x$ from $U_x$ onto a cube in $\mathbb{R}^d$ such that $\phi_x(\Omega\cap U_x)$ equals the lower half of the cube and such that $\partial\Omega\cap U_x$ is mapped onto the top surface of the lower half cube.
\end{assumption}

\begin{remark}\label{Rem:embedding_w1p_lp}
	Assumption~\ref{Ass:existence_extension} has the following consequences:
	\begin{enumerate}
		\item
		Firstly, Assumption~\ref{Ass:existence_extension} is needed in order to assure the existence of continuous extension operators from $\mathrm{W}_{\Gamma_{D_j}}^{1,p}(\Omega)$ to $\mathrm{W}_{\Gamma_{D_j}}^{1,p}(\mathbb{R}^d)$ for all $j\in\{1,\cdots,m\}$ and $p\in (1,\infty)$. This in turn is required in \cite[Section 3]{rehbergsystems} to establish interpolation properties between the spaces $\{\mathrm{W}_{\Gamma_{D_j}}^{1,p}(\Omega)\}_{p\in (1,\infty)}$ for fixed $j\in\{1,\cdots,m\}$.
		Secondly, the assumption is used in \cite[Section 5]{rehbergsystems} to prove elliptic and parabolic regularity results, see Theorem~\ref{Thm:elliptic_regularity_for_systems} below.
		\item 
			Under Assumption~\ref{Ass:existence_extension} it can be shown that the embeddings
			$\mathrm{W}_{\Gamma_{D_j}}^{1,p}(\Omega)\hookrightarrow \mathrm{L}^q(\Omega)$ are compact for $q\in [1,\frac{dp}{d-p})$ if $p\in (1,d)$ and for arbitrary $q\in[1,\infty)$ if $p\geq d$ \cite[Remark 3.2]{rehbergsystems}. The proof is almost equal to the proofs of \cite[Part II,\, 5.6.1,\, Theorem 2]{evans} and \cite[Part II,\,5.7,\, Theorem 1]{evans}.
	\end{enumerate}
\end{remark}

\subsection{Operators and their properties}\label{sec:elliptic-regularity-for-systems-and-fractional-power-spaces}
In this subsection, we define the required Sobolev spaces of vector valued functions and introduce the operators $A_p$. Our notation differs from the one in \cite{rehbergsystems}. This is done in order to provide a structured framework for the construction of $A_p$ and to highlight the spaces on which each particular operator acts.
Results from the literature assure that $A_p$ satisfies the properties which we need for the analysis of \eqref{state_eqation_abstract} for particular values of $p$ to be chosen. 

We begin with two definitions \cite[Section 6]{rehbergsystems}:
\begin{definition}\label{Def:vector_Sobolev_space}
	With Assumption \ref{Ass:domain} and Assumption \ref{Ass:existence_extension} and $p\in [1,\infty)$ we define a Sobolev space of vector valued functions by the product space
	\begin{align*}
	\mathbb{W}_{\Gamma_D}^{1,p}(\Omega):= \prod\limits_{j=1}^m \mathrm{W}_{\Gamma_{D_j}}^{1,p}(\Omega).
	\end{align*}
	For $p\in (1,\infty)$ we denote its (componentwise) dual by $\mathbb{W}_{\Gamma_D}^{-1,p'}(\Omega)$. 
	
	We also define the operators
	\[\mathcal{L}_p:\mathbb{W}_{\Gamma_D}^{1,p}(\Omega)\rightarrow \mathrm{L}^p(\Omega, \mathbb{R}^{md}),\ \mathcal{L}_p(u):= \mathrm{vec}(\nabla u)=(\nabla u_1,\cdots, \nabla u_m)^\intercal\]

	and 
	\[I_p:\mathbb{W}_{\Gamma_D}^{1,p}(\Omega)\rightarrow \mathbb{W}_{\Gamma_D}^{-1,p}(\Omega),\ \langle I_p u,v \rangle_{\mathbb{W}_{\Gamma_D}^{1,p'}(\Omega)}:=\int_\Omega u\cdot v\,dx
	\ \forall v\in \mathbb{W}_{\Gamma_D}^{1,p'}(\Omega).\]
\end{definition}

Now we can define the operators $A_p$ and state the associate properties:
\begin{definition}\label{Def:A_p}
	Let the constants $d_1,\cdots, d_m > 0$ be given diffusion coefficients and
	\begin{align*}
	D=\mathrm{diag}(d_1,\cdots, d_1,\cdots, d_m,\cdots, d_m)\in \mathbb{R}^{md\times md}.
	\end{align*}
	
	For $p\in (1,\infty)$ we set
	\[
	\mathcal{A}_p: \mathbb{W}_{\Gamma_D}^{1,p}(\Omega) \rightarrow \mathbb{W}_{\Gamma_D}^{-1,p}(\Omega),\ \mathcal{A}_p:= \mathcal{L}_{p'}^* D \mathcal{L}_p.
	\]
	
	We define the unbounded operator
	\[
	A_p: \mathbb{W}_{\Gamma_D}^{-1,p}(\Omega) \rightarrow \mathbb{W}_{\Gamma_D}^{-1,p}(\Omega),\ A_p:=\mathcal{A}_p I_p^{-1}
	\]
	with domain
	\[
	\mathrm{dom}(A_p) = \mathrm{ran}\left(I_p\right) \subset \mathbb{W}_{\Gamma_D}^{-1,p}(\Omega),
	\]
	where $\mathrm{ran}\left(I_p\right)$ stands for the range of $I_p$.
	
%
\end{definition}

The following result is shown in \cite[Theorem 5.6 and Theorem 5.12]{rehbergsystems}:
\begin{theorem}\label{Thm:elliptic_regularity_for_systems}
	In the setting of Definition~\ref{Def:vector_Sobolev_space} and Definition~\ref{Def:A_p} there exists an open interval $\mathrm{J}$ around $2$ such that for all $p\in \mathrm{J}$ the operator $\mathcal{A}_p + I_p$ is a topological isomorphism between $\mathbb{W}_{\Gamma_D}^{1,p}(\Omega)$ and $\mathbb{W}_{\Gamma_D}^{-1,p}(\Omega)$.
	
	There is a constant $c>0$ such that for all $p\in \mathrm{J}$ and $\lambda\in \mathbb{C}_+:=\lbrace z\in \mathbb{C}: \mathrm{Re}z\geq 0 \rbrace$ the resolvent estimate 
	\begin{align*}
	\Vert (A_p+1+\lambda)^{-1} \Vert_{\mathcal{L}(\mathbb{W}_{\Gamma_D}^{-1,p}(\Omega))} \leq  \frac{c}{1+|\lambda|}
	\end{align*}
	holds true and $-A_p$ generates an analytic semigroup of operators on $\mathbb{W}_{\Gamma_D}^{-1,p}(\Omega)$.
\end{theorem}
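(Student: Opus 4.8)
The strategy is to prove the statement first for $p=2$ by Hilbert space methods, then to extrapolate to an interval around $2$ along the interpolation scale $\{\mathbb{W}_{\Gamma_D}^{\pm1,p}(\Omega)\}_p$. Only the \emph{uniformity in $\lambda$} of the resolvent estimate genuinely goes beyond a routine extrapolation; this is the step I expect to be the main obstacle.

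\emph{Step 1 ($p=2$).} The operator $\mathcal{A}_2+I_2$ is represented by the bilinear form
\[
a(u,v):=\int_\Omega D\,\mathcal{L}_2(u)\cdot\mathcal{L}_2(v)\,dx+\int_\Omega u\cdot v\,dx
\]
on $\mathbb{W}_{\Gamma_D}^{1,2}(\Omega)\times\mathbb{W}_{\Gamma_D}^{1,2}(\Omega)$. Since $D$ is diagonal with strictly positive entries and, in the norm convention of Definition~\ref{Def:Sobolev_space}, $\|u\|_{\mathbb{W}_{\Gamma_D}^{1,2}(\Omega)}^2=\int_\Omega(|u|^2+|\nabla u|^2)\,dx$, the form $a$ is bounded and coercive, so Lax--Milgram shows that $\mathcal{A}_2+I_2$ is a topological isomorphism. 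Moreover $a$ is symmetric, so $[f,g]:=\langle f,(\mathcal{A}_2+I_2)^{-1}g\rangle_{\mathbb{W}_{\Gamma_D}^{1,2}(\Omega)}$ defines an inner product on $\mathbb{W}_{\Gamma_D}^{-1,2}(\Omega)$ equivalent to its original norm, and, using $A_2+1=(\mathcal{A}_2+I_2)I_2^{-1}$ on $\mathrm{dom}(A_2)=\mathrm{ran}(I_2)$, a short computation shows that $A_2+1$ is self-adjoint and bounded below by a positive constant with respect to $[\cdot,\cdot]$. The spectral theorem then gives $\sigma(A_2+1)\subset[\delta,\infty)$ for some $\delta>0$ and
\[
\|(A_2+1+\lambda)^{-1}\|_{\mathcal{L}(\mathbb{W}_{\Gamma_D}^{-1,2}(\Omega))}\le\frac{C}{\mathrm{dist}(-(1+\lambda),[\delta,\infty))}\le\frac{c}{1+|\lambda|}\qquad(\lambda\in\mathbb{C}_+),
\]
the last inequality because $\mathrm{Re}(1+\lambda)\ge1$.

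\emph{Step 2 (isomorphism for $p$ near $2$).} By Assumption~\ref{Ass:existence_extension}, cf.\ Remark~\ref{Rem:embedding_w1p_lp}, for each $j$ there is a bounded extension operator $\mathrm{W}_{\Gamma_{D_j}}^{1,p}(\Omega)\to\mathrm{W}_{\Gamma_{D_j}}^{1,p}(\mathbb{R}^d)$ whose norm is controlled uniformly for $p$ near $2$. This exhibits $\mathrm{W}_{\Gamma_{D_j}}^{1,p}(\Omega)$ as a retract of an interpolation scale on $\mathbb{R}^d$; taking products over $j$ and dualising shows that $\{\mathbb{W}_{\Gamma_D}^{1,p}(\Omega)\}_p$ and $\{\mathbb{W}_{\Gamma_D}^{-1,p}(\Omega)\}_p$ are complex interpolation scales with $[\mathbb{W}_{\Gamma_D}^{\pm1,p_0}(\Omega),\mathbb{W}_{\Gamma_D}^{\pm1,p_1}(\Omega)]_\theta=\mathbb{W}_{\Gamma_D}^{\pm1,p_\theta}(\Omega)$ for $1/p_\theta=(1-\theta)/p_0+\theta/p_1$. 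The operators $\mathcal{A}_p+I_p$ are mutually consistent on this scale, since on $\mathbb{W}_{\Gamma_D}^{1,p_0}(\Omega)\cap\mathbb{W}_{\Gamma_D}^{1,p_1}(\Omega)$ they act as the same distribution. Since $\mathcal{A}_2+I_2$ is invertible by Step~1, the stability theorem of \v{S}ne\u{\i}berg for isomorphisms between interpolation scales produces an open interval $\mathrm{J}\ni2$ on which $\mathcal{A}_p+I_p:\mathbb{W}_{\Gamma_D}^{1,p}(\Omega)\to\mathbb{W}_{\Gamma_D}^{-1,p}(\Omega)$ is a topological isomorphism with uniformly bounded inverses.

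\emph{Step 3 (uniform resolvent estimate and analyticity).} From $A_p+1+\lambda=(\mathcal{A}_p+(1+\lambda)I_p)I_p^{-1}$ one gets $(A_p+1+\lambda)^{-1}=I_p(\mathcal{A}_p+(1+\lambda)I_p)^{-1}$, so it suffices to estimate $(\mathcal{A}_p+(1+\lambda)I_p)^{-1}$. For $\lambda$ in a bounded subset of $\mathbb{C}_+$ this is a Neumann-series perturbation of the isomorphism $\mathcal{A}_p+I_p$ from Step~2, the perturbation $\lambda I_p$ factoring through the compact embedding $\mathbb{W}_{\Gamma_D}^{1,p}(\Omega)\hookrightarrow\mathrm{L}^p(\Omega)$ of Remark~\ref{Rem:embedding_w1p_lp}. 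The essential difficulty — where a bare \v{S}ne\u{\i}berg argument no longer suffices — is uniformity as $|\lambda|\to\infty$: here one exploits that $\mathcal{A}_p$ is homogeneous of order two while $I_p$ is of order zero, rescaling to absorb the large parameter, and shows that the set of $p$ for which a resolvent estimate holds uniformly in $\lambda\in\mathbb{C}_+$ is open and contains $p=2$ by Step~1. Finally, given $\|(A_p+1+\lambda)^{-1}\|\le c/(1+|\lambda|)$ for $\lambda\in\mathbb{C}_+$, a Neumann series sharpens this to show that $\sigma(A_p+1)$ lies in a sector about the positive real axis of half-angle strictly below $\pi/2$, with the same resolvent bound outside it; equivalently, $A_p+1$ is sectorial of angle $<\pi/2$. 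Since $\mathrm{dom}(A_p)=\mathrm{ran}(I_p)$ is dense in $\mathbb{W}_{\Gamma_D}^{-1,p}(\Omega)$, the standard generation theorem then implies that $-(A_p+1)$, and hence $-A_p$, generates an analytic semigroup on $\mathbb{W}_{\Gamma_D}^{-1,p}(\Omega)$.
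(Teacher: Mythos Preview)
The paper does not actually prove this theorem: it is quoted verbatim from \cite[Theorem~5.6 and Theorem~5.12]{rehbergsystems}. So there is no in-paper argument to compare against. That said, your overall strategy --- Lax--Milgram and spectral theory at $p=2$, then \v{S}ne\u{\i}berg extrapolation along the interpolation scale $\{\mathbb{W}_{\Gamma_D}^{\pm1,p}(\Omega)\}_p$ --- is precisely the route taken in the cited reference, so Steps~1 and~2 are on target and essentially complete.

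The genuine gap is in Step~3. Your phrase ``rescaling to absorb the large parameter'' does not survive on a bounded domain: there is no dilation $x\mapsto|\lambda|^{-1/2}x$ that preserves $\Omega$, so the homogeneity of $\mathcal{A}_p$ versus $I_p$ cannot be exploited that way. What actually makes the uniformity in $\lambda$ work is to run \v{S}ne\u{\i}berg not on the fixed spaces $\mathbb{W}_{\Gamma_D}^{\pm1,p}(\Omega)$ but on the same spaces equipped with the $\lambda$-dependent norms
\[
\|u\|_{1,p,\lambda}:=\Bigl(\int_\Omega\bigl(|1+\lambda|\,|u|^2+|\nabla u|^2\bigr)^{p/2}\,dx\Bigr)^{1/p},\qquad
\|f\|_{-1,p,\lambda}:=\sup_{\|v\|_{1,p',\lambda}\le1}|\langle f,v\rangle|.
\]
One then checks three things, each uniformly in $\lambda\in\mathbb{C}_+$: (i) these normed spaces still interpolate by the complex method with $\lambda$-independent constants (this uses the retract structure from Remark~\ref{Rem:embedding_w1p_lp}); (ii) the operator $\mathcal{A}_p+(1+\lambda)I_p$ is bounded from $(\mathbb{W}_{\Gamma_D}^{1,p},\|\cdot\|_{1,p,\lambda})$ to $(\mathbb{W}_{\Gamma_D}^{-1,p},\|\cdot\|_{-1,p,\lambda})$ with a $\lambda$-free bound; and (iii) at $p=2$ the coercivity $\mathrm{Re}\,\langle(\mathcal{A}_2+(1+\lambda)I_2)u,u\rangle\ge c\|u\|_{1,2,\lambda}^2$ gives a $\lambda$-uniform inverse bound. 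Since the width of the \v{S}ne\u{\i}berg interval depends only on these uniform bounds, one obtains a single interval $\mathrm{J}$ valid for all $\lambda$, and unpacking $\|\cdot\|_{-1,p,\lambda}$ yields $\|(A_p+1+\lambda)^{-1}\|\le c/(1+|\lambda|)$. Without this device your Step~3 produces an interval $\mathrm{J}(\lambda)$ that could in principle collapse to $\{2\}$ as $|\lambda|\to\infty$, which is exactly the obstacle you flagged but did not remove.
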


\begin{remark}\label{Rem:frac_powers}
	Let $p\in \mathrm{J}$ with $\mathrm{J}$ from Theorem~\ref{Thm:elliptic_regularity_for_systems}.
	We equip $\mathrm{dom}(A_p)$ with the graph norm 
	\[\|y\|_{\mathrm{dom}(A_p)}=\|y\|_{\mathbb{W}_{\Gamma_D}^{-1,p}(\Omega)} + \|A_p y\|_{\mathbb{W}_{\Gamma_D}^{-1,p}(\Omega)}.\]
	Then $A_p$ is densely defined and closed and $\mathrm{dom}(A_p)$ is topologically equivalent to $\mathbb{W}_{\Gamma_D}^{1,p}(\Omega)$. Remark~\ref{Rem:embedding_w1p_lp}~(ii) therefore implies that $\mathrm{dom}(A_p)$ is compactly embedded into $\mathbb{W}_{\Gamma_D}^{-1,p}(\Omega)$.
	
	Furthermore, for $\theta \geq 0$ the fractional power spaces $X^\theta:= \mathrm{dom}([A_p + 1]^\theta)\subset \mathbb{W}_{\Gamma_D}^{-1,p}(\Omega)$ and the unbounded operators $[A_p + 1]^\theta$ are well-defined \cite[Chapter 1]{henry}.
	Note that $X^0=\mathbb{W}_{\Gamma_D}^{-1,p}(\Omega)$.
	In $X^\theta$ we use the norm
	\[
	\|y\|_{X^\theta}= \|(A_p+1)^\theta y\|_{\mathbb{W}_{\Gamma_D}^{-1,p}(\Omega)}.
	\]
	
	Also for $z\in \{\zeta \in \mathbb{C}:\ \mathrm{Re}(\zeta)> 0\}$ one can define the fractional powers $[A_p + 1]^z$ by the inverse of the operators $[A_p + 1]^{-z}$
	\cite[Chapter 7]{yagi2009abstract}. For $\theta \in \mathbb{R}$ and suitable $y\in \mathbb{W}_{\Gamma_D}^{-1,p}(\Omega)$ one can further define $[A_p + 1]^{i\theta}y$ by the limit of $[A_p + 1]^{z}y$ for $z\rightarrow i\theta$ with $\mathrm{Re}(z)> 0$. This leads to the notion of bounded purely imaginary powers of an operator \cite[Chapter 8]{yagi2009abstract}.
	We will not need the theory of purely imaginary powers in the rest of this paper. However, we will use the fact that $A_p+1$ has bounded purely imaginary powers for $p\in \mathrm{J}\cap [2,\infty)$ in order apply an existing result, which allows us to represent the spaces $X^\theta$ by complex interpolation spaces for $\theta\in (0,1)$, see Remark~\ref{Rem:maximal parabolic regularity} below.
\end{remark}

%

We introduce the notion of maximal parabolic regularity \cite[Definition 2.7]{meyeroptimal} or \cite[Definition 11.2]{squareroot_problem}. This property allows us to improve the regularity of the mild solution $y$ of our evolution equation.

\begin{definition}\label{Def:maximal parabolic regularity}
For $p,q\in (1,\infty)$ and $(t_0,T)\subset \mathbb{R}$, we say that $A_p$ satisfies maximal parabolic $\mathrm{L}^{q}((t_0,T);\mathbb{W}_{\Gamma_D}^{-1,p}(\Omega))$-regularity if for all
$g\in \mathrm{L}^q\left((t_0,T);\mathbb{W}_{\Gamma_D}^{-1,p}(\Omega)\right)$ there is a unique solution $y\in \mathrm{W}^{1,q}((t_0,T);\mathbb{W}_{\Gamma_D}^{-1,p}(\Omega))\cap \mathrm{L}^{q}((t_0,T);\mathrm{dom}(A_p))$ of the equation
\[
\frac{d}{dt}y+A_p y = g,\ y(t_0)=0.
\]

The time derivative is taken in the sense of distributions \cite[Definition 11.2]{squareroot_problem}.

We abbreviate
\begin{align*}
&Y_q:= \mathrm{W}^{1,q}((0,T);\mathbb{W}_{\Gamma_D}^{-1,p}(\Omega))\cap \mathrm{L}^{q}((0,T);\mathrm{dom}(A_p)) \text{ and}\\
&Y_{q,t}:= \{y\in Y_q:\ y(t)=0\} \text{ for } t\in [0,T].
\end{align*}
\end{definition}

\begin{remark}\label{Rem:maximal parabolic regularity}
	The following properties go along with maximal parabolic regularity:
	\begin{enumerate}
		\item
		Maximal parabolic regularity is independent of
		$q\in (1,\infty)$ and of the interval $(t_0,T)$ so that we just say that $A_p$ satisfies maximal parabolic regularity on $\mathbb{W}_{\Gamma_D}^{-1,p}(\Omega)$ \cite[Remark 11.3]{squareroot_problem}.
		\item
		If $A_p$ satisfies maximal parabolic regularity on $\mathbb{W}_{\Gamma_D}^{-1,p}(\Omega)$ then
		
		$(\frac{d}{dt}+A_p)^{-1}$
		is bounded as an operator from $\mathrm{L}^q((0,T);\mathbb{W}_{\Gamma_D}^{-1,p}(\Omega))$ to $Y_{q,0}$ \cite[Proof of Proposition 2.8]{meyeroptimal}.
		\item
		If $p\in \mathrm{J}\cap [2,\infty)$ with $\mathrm{J}$ from Theorem~\ref{Thm:elliptic_regularity_for_systems} then by \cite[Theorem 11.5]{squareroot_problem}, $A_p +1$ has bounded imaginary powers and satisfies maximal parabolic Sobolev regularity on $\mathbb{W}_{\Gamma_D}^{-1,p}(\Omega)$, see also Remark~\ref{Rem:frac_powers}.
		This yields that for $p\in \mathrm{J}\cap [2,\infty)$ also $A_p$ satisfies maximal parabolic Sobolev regularity on $\mathbb{W}_{\Gamma_D}^{-1,p}(\Omega)$ and with \cite[Theorem 11.6.1]{carracedo} we conclude that we have the topological equivalences	
		\begin{align*}
		[\mathbb{W}_{\Gamma_D}^{-1,p}(\Omega), \mathbb{W}_{\Gamma_D}^{1,p}(\Omega)]_{\theta} \simeq [\mathbb{W}_{\Gamma_D}^{-1,p}(\Omega), \mathrm{dom}(A_p)]_{\theta} \simeq X^\theta
		\end{align*}
		for $\theta\in (0,1)$.
		By $[\cdot, \cdot]_{\theta}$ we mean complex interpolation.
	\end{enumerate}

\end{remark}

The following embedding properties will be used several times \cite[Theorem 3]{amann_measures}: 
\begin{remark}\label{Rem:embeddings}
	Let $p\in \mathrm{J}$ with $\mathrm{J}$ from Theorem~\ref{Thm:elliptic_regularity_for_systems}. With $q\in (1,\infty)$ one has
	\begin{align*}
	Y_q &\hhookrightarrow \mathrm{C}^\beta((0,T); (\mathbb{W}_{\mathrm{\Gamma_D}}^{-1,p}(\Omega), \mathrm{dom}(A_p))_{\eta,1})
	\hookrightarrow \mathrm{C}^{\beta}((0,T); [\mathbb{W}_{\mathrm{\Gamma_D}}^{-1,p}(\Omega), \mathrm{dom}(A_p)]_{\theta})
	\text{ and }\\
	Y_q &\hhookrightarrow \mathrm{C}([0,T]; (\mathbb{W}_{\mathrm{\Gamma_D}}^{-1,p}(\Omega), \mathrm{dom}(A_p))_{\eta,q})
	\hookrightarrow \mathrm{C}([0,T]; [\mathbb{W}_{\mathrm{\Gamma_D}}^{-1,p}(\Omega), \mathrm{dom}(A_p)]_{\theta})
	\end{align*}
	for every $0<\theta < \eta < 1-1/q$ and $0\leq\beta < 1-1/q -\eta$. $(\cdot,\cdot)_{\eta,1}$ or $(\cdot,\cdot)_{\eta,q}$ respectively means real interpolation here. Compactness of the first embeddings follows because $\mathrm{dom}(A_p)$ is compactly embedded into $\mathbb{W}_{\mathrm{\Gamma_D}}^{-1,p}(\Omega)$, see Remark~\ref{Rem:frac_powers}.
	
\end{remark}

\begin{remark}
	For $p\in \mathrm{J}$ with $\mathrm{J}$ from Theorem~\ref{Thm:elliptic_regularity_for_systems}, we collect several estimates for the operator $(A_p+1)^\theta$ and the analytic semigroup $\exp(-A_p t)$:
	For $t>0$ and arbitrary $0<\gamma <1$ it is shown in \cite[Theorem 1.3.4]{henry} that for some $C>0$ one can estimate 
	\begin{align*}
	&\Vert \exp(-A_p t) \Vert_{\mathcal{L}(\mathbb{W}_{\mathrm{\Gamma_D}}^{-1,p}(\Omega))}\leq C \exp((1-\gamma) t)
	\text{ and }\\
	&\Vert (A_p+1)\exp(-A_p t) \Vert_{\mathcal{L}(\mathbb{W}_{\mathrm{\Gamma_D}}^{-1,p}(\Omega))}\leq \frac{C}{t} \exp((1-\gamma) t).
	\end{align*}
	
	Moreover for each $\theta\geq 0$, according to \cite[Theorem 1.4.3]{henry}, there is some 
	
	$C_\theta\in (0,\infty)$ such that
	\begin{align}\label{frac_pow_estimate1}
	\Vert (A_p+1)^{\theta}\exp(-A_p t) \Vert_{\mathcal{L}(\mathbb{W}_{\Gamma_D}^{-1,p}(\Omega))}\leq C_\theta t^{-\theta}\exp((1-\gamma) t).
	\end{align}
	The constants $C_\theta$ are bounded if $\theta$ is contained in any compact subinterval of $(0,\infty)$ and also for $\theta\downarrow 0$.
\end{remark}

\subsection{Main assumption and notation}\label{Sec:Ass}
We collect several assumptions and introduce some short notation for the spaces and functions.

\begin{assumption}\label{Ass:general_ass_and_short_notation}
	We always suppose that Assumption~\ref{Ass:domain} and Assumption~\ref{Ass:existence_extension} hold.
	
	Moreover we assume:
	\begin{itemize}
		\item $d\geq 2$ and with $\mathrm{J}$ from Theorem~\ref{Thm:elliptic_regularity_for_systems} there holds $p\in \mathrm{J}\cap[2,\infty)$
		and
		$2\geq p\left(1-\frac{1}{d}\right)$. 
		
		\item For some $w\in\mathbb{W}^{1,p'}_{\Gamma_D}(\Omega)\simeq [\mathbb{W}_{\Gamma_D}^{-1,p}(\Omega)]^*$
		the operator $S\in [\mathbb{W}_{\Gamma_D}^{-1,p}(\Omega)]^*$ from equation~\eqref{state_eqation_abstract} is given by
		\[
		S y = \langle y,w \rangle_{\mathbb{W}_{\Gamma_D}^{1,p'}(\Omega)} \ \forall y\in \mathbb{W}_{\Gamma_D}^{-1,p}(\Omega).
		\]
		Note that $S$ belongs to $[X^\theta]^*$ for all $\theta\geq 0$ because of the embedding 
		
		$X^\theta\hookrightarrow \mathbb{W}_{\Gamma_D}^{-1,p}(\Omega)$.
		We assume $S\neq 0$.
		\item We will need a fractional power space with exponent strictly smaller than one. This fact is highlighted by a new parameter $\alpha$ instead of $\theta\in [0,\infty)$ from above.
		Assume that for some $\alpha\in (0,1)$ the function 
		$f:X^\alpha\times \mathbb{R}\rightarrow \mathbb{W}_{\Gamma_D}^{-1,p}(\Omega)$ is locally Lipschitz continuous with respect to the $X^\alpha$-norm.
		
		This means that for every $y_0\in X^\alpha$ there is a constant $L(y_0)$ and a neighbourhood 
		\[
		V(y_0)=\left\lbrace y\in X^\alpha: \Vert y-y_0 \Vert_{X^\alpha}\leq \delta \right\rbrace
		\] of $y_0$ such that
		\begin{align*}
		\Vert f(y_1,x_1) - f(y_2,x_2) \Vert_{X} \leq L(y_0) \left( \Vert y_1-y_2 \Vert_{\alpha} + \vert x_1-x_2 \vert \right)
		\end{align*}
		for every $y_1,y_2 \in V(y_0)$ and all $x_1,x_2\in \mathbb{R}$.
		
		
		Moreover, $f$ is assumed to have at most linear growth along solutions, i.e.
		\begin{align*}
		\Vert f(y,x) \Vert_{\mathbb{W}_{\Gamma_D}^{-1,p}(\Omega)} \leq M \left( 1+ \Vert y \Vert_{\alpha} + \vert x \vert \right)
		\end{align*}
		for some constant $M>0$.

	\end{itemize}
\end{assumption}

In the setting of Assumption~\ref{Ass:general_ass_and_short_notation} we collect the notation for the rest of the work:
\begin{itemize}
	\item For the particular $p$ from Assumption~\ref{Ass:general_ass_and_short_notation} we set
	\[X:=\mathbb{W}_{\Gamma_D}^{-1,p}(\Omega)\]
	with $\mathbb{W}_{\Gamma_D}^{-1,p}(\Omega)$ from Definition~\ref{Def:vector_Sobolev_space}.
	We sometimes identify elements $v\in X^*$ with their Riesz representation in $\mathbb{W}_{\Gamma_D}^{1,p'}(\Omega)$, i.e.
	\[
	\langle v,y \rangle_{X}= \langle y,v \rangle_{\mathbb{W}_{\Gamma_D}^{1,p'}(\Omega)}\ \forall y\in X.
	\]
	
	\item The operators $A_p$ and the spaces $X^\theta = \mathrm{dom}([A_p+1]^\theta)$ are defined as in Definition~\ref{Def:A_p} and Remark~\ref{Rem:frac_powers}.
	
	\item The spaces $Y_q$ and $Y_{q,t}$
	are defined as in Definition~\ref{Def:maximal parabolic regularity}.
	
	\item $\mathcal{W}$ is the scalar stop operator. This operator is represented by \eqref{stop1}-\eqref{stop3}. Other representations can for example be found in \cite[Chapter III.3]{visintin2013differential}.
	
	\item We abbreviate $J_T=(0,T)$.
\end{itemize}

%

\subsection{Regularity of the stop operator}\label{SUBSEC:Stop}
The stop operator $\mathcal{W}$ which is represented by \eqref{stop1}-\eqref{stop3} is Lipschitz continuous as an operator on $\mathrm{C}(\overline{J_T})$ according to \cite[Part 1, Chapter III Lemma 2.1, Theorem 3.2 and Theorem 3.3]{visintin2013differential} with
\begin{align}
|\mathcal{W}[v_1](t)- \mathcal{W}[v_2](t)| \leq 2\sup\limits_{0\leq \tau \leq t} \vert v_1(\tau) -v_2(\tau) \vert && \text{ and } && \mathcal{W}[v](t) \leq 2\sup\limits_{0\leq \tau \leq t} \vert v(\tau) \vert + z_0\label{hyst_growth}
\end{align}
for all $v,v_1,v_2 \in \mathrm{C}(\overline{J_T})$ and $t\in [0,T]$. We have to add $z_0$ in \eqref{hyst_growth} because, by \eqref{stop3}, $\mathcal{W}[v](0)=z_0$ for any $v \in \mathrm{C}(\overline{J_T})$.

$\mathcal{W}$ is also bounded and weakly continuous on $\mathrm{W}^{1,q}(J_T)$ for $q\in [1,\infty)$ \cite[Part 1, Chapter III., Theorem 3.2]{visintin2013differential}.

\section{Well-posedness of the evolution equation}\label{Sec:Well_posed}
We recap equation~\eqref{state_eqation_abstract} from the introduction which is
\begin{alignat*}{2}
\frac{d}{dt}y(t) + (A_p y)(t) &= (F[y])(t) + u(t)\ && \text{ in } X=\mathbb{W}_{\Gamma_D}^{-1,p}(\Omega) \text{ for } t>0,\\
y(0)&=0\in X,\notag
\end{alignat*}
where $(F[y])(t):=f(y(t),\mathcal{W}[S y](t))$. 
We recall that $X$ is a product of dual spaces.
In this section we show well-posedness of the problem. 
The first aim is to show that for every $u\in \mathrm{L}^q(J_T;X)$ with $q\in \left(\frac{1}{1-\alpha},\infty\right]$ problem \eqref{state_eqation_abstract} has a unique mild solution $y\in \mathrm{C}(\overline{J_T};X^\alpha)$, where $\alpha$ is fixed by Assumption~\ref{Ass:general_ass_and_short_notation}.
In particular, this means that $(F[y]) + u$ is contained in $\mathrm{L}^1(J_T;X)$ and that $y$ solves the integral equation
\begin{align}
y(t) = \int_{0}^{t} \exp(-A_p(t-s))[(F[y])(s) + u(s)] \, ds,\ t\in J_T\label{Def:mild_solution}
\end{align}
\cite[Definition 7.0.2]{lunardi}.
Afterwards we prove that the unique mild solution even belongs to $Y_{s,0}$ where $s=q$ if $q < \infty$ and with $s\in (1,\infty)$ arbitrary if $q = \infty$.


\begin{theorem}\label{Thm:state_equ_wellposed}
	Let Assumption \ref{Ass:general_ass_and_short_notation} hold.
	
	Then for all $u\in \mathrm{L}^q(J_T;X)$ with $q\in \left(\frac{1}{1-\alpha},\infty\right]$ problem \eqref{state_eqation_abstract} has a unique mild solution
	
	$y=y(u)$ in $\mathrm{C}(\overline{J_T};X^\alpha)$. Note that $X^\alpha \subset X$ since $\alpha\in (0,1)$.

	The solution mapping 
	\[G:u \mapsto y(u),\ \mathrm{L}^q(J_T;X) \rightarrow \mathrm{C}(\overline{J_T};X^\alpha)\]
	is locally Lipschitz continuous.
	
	$G$ is linearly bounded with values in $\mathrm{C}(\overline{J_T};X^\alpha)$, i.e. for some $C=C(T)>0$ there holds
	\begin{align}
	\Vert G(u) \Vert_{\mathrm{C}(\overline{J_T};X^\alpha)} &\leq C(T) (1+\Vert u\Vert_{\mathrm{L}^q(J_T;X)})\label{solution_op_bdd}
	\end{align} 
	for all $u\in \mathrm{L}^q(J_T;X)$ and $C$ is independent of $u$.
	All statements remain valid if $\mathrm{C}(\overline{J_T};X^\alpha)$ is replaced by 
	$Y_{s,0}$ where $s=q$ if $q < \infty$ and $s\in (1,\infty)$ arbitrary if $q = \infty$.
\end{theorem}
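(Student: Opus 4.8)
The plan is to set up a fixed-point argument in the space $\mathrm{C}(\overline{J_T};X^\alpha)$. First I would fix $u\in\mathrm{L}^q(J_T;X)$ with $q\in(\tfrac{1}{1-\alpha},\infty]$ and define, for $y\in\mathrm{C}(\overline{J_T};X^\alpha)$, the map
\[
(\Phi y)(t)=\int_0^t \exp(-A_p(t-s))\bigl[(F[y])(s)+u(s)\bigr]\,ds .
\]
The first task is to check that $\Phi$ is well-defined, i.e.\ that $\Phi y\in\mathrm{C}(\overline{J_T};X^\alpha)$. For this I use the linear growth of $f$ from Assumption~\ref{Ass:general_ass_and_short_notation} together with the Lipschitz/growth bounds \eqref{hyst_growth} on $\mathcal{W}$ and the boundedness of $S\in[X^\alpha]^*$ to see that $s\mapsto\|(F[y])(s)\|_X$ is in $\mathrm{L}^\infty(J_T)$, hence in $\mathrm{L}^q(J_T)$; then the smoothing estimate \eqref{frac_pow_estimate1} with $\theta=\alpha$ gives
\[
\|(A_p+1)^\alpha\exp(-A_p(t-s))\|_{\mathcal{L}(X)}\le C_\alpha (t-s)^{-\alpha}e^{(1-\gamma)(t-s)},
\]
and since $q'\alpha<1$ (equivalently $q>\tfrac{1}{1-\alpha}$) the kernel $(t-s)^{-\alpha}$ lies in $\mathrm{L}^{q'}(0,T)$, so Hölder's inequality bounds $\|(\Phi y)(t)\|_{X^\alpha}$; continuity in $t$ follows from dominated convergence and the strong continuity of the semigroup. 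The main obstacle is hidden here in disguise: unlike the classical local-in-time case, $(F[y])(s)$ depends on the whole history $y|_{[0,s]}$ through $\mathcal{W}[Sy]$, so one cannot localize on a small time interval to get a contraction. I would deal with this by a \emph{weighted-norm / Bielecki} trick or, alternatively, by iterating on successive subintervals.

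Concretely, for local Lipschitz continuity of $\Phi$ (and hence existence of a fixed point) I would work on a bounded ball $B_R\subset\mathrm{C}(\overline{J_T};X^\alpha)$, use the local Lipschitz constant $L=L(R)$ of $f$ valid on that ball, and estimate, for $y_1,y_2\in B_R$,
\[
\|(F[y_1])(s)-(F[y_2])(s)\|_X\le L\Bigl(\|y_1(s)-y_2(s)\|_\alpha+\bigl|\mathcal{W}[Sy_1](s)-\mathcal{W}[Sy_2](s)\bigr|\Bigr),
\]
and then \eqref{hyst_growth} gives $|\mathcal{W}[Sy_1](s)-\mathcal{W}[Sy_2](s)|\le 2\|S\|\sup_{\tau\le s}\|y_1(\tau)-y_2(\tau)\|_\alpha$. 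Introducing the weighted norm $\|y\|_\lambda:=\sup_{t}e^{-\lambda t}\|y(t)\|_\alpha$, the convolution with the integrable kernel $(t-s)^{-\alpha}e^{(1-\gamma)(t-s)}$ shows $\|\Phi y_1-\Phi y_2\|_\lambda\le c(L)\,\omega(\lambda)\,\|y_1-y_2\|_\lambda$ where $\omega(\lambda)\to 0$ as $\lambda\to\infty$; choosing $\lambda$ large makes $\Phi$ a contraction on $B_R$ in the equivalent norm $\|\cdot\|_\lambda$. To close the argument one needs $\Phi(B_R)\subset B_R$ for a suitable $R$; this is where the \emph{linear} growth of $f$ is essential — a Gronwall-type estimate on $\|(\Phi y)(t)\|_\alpha$ using the singular but integrable kernel (a singular Gronwall / Henry lemma, \cite[Lemma 7.1.1]{henry}) yields an a priori bound $\|y\|_{\mathrm{C}(\overline{J_T};X^\alpha)}\le C(T)(1+\|u\|_{\mathrm{L}^q(J_T;X)})$ that does not depend on $R$, so one can first derive the bound and then take $R$ accordingly. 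Banach's fixed point theorem then gives a unique $y\in B_R$ with $\Phi y=y$; uniqueness in all of $\mathrm{C}(\overline{J_T};X^\alpha)$ follows from the same weighted-norm contraction estimate applied to any two solutions (which automatically lie in a common ball). This simultaneously establishes existence, uniqueness, the linear bound \eqref{solution_op_bdd}, and — running the two-solution estimate with two different data $u_1,u_2$ — the local Lipschitz continuity of $G$.

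Finally, for the improved regularity $y\in Y_{s,0}$: once $y\in\mathrm{C}(\overline{J_T};X^\alpha)$ is known, the right-hand side $g:=F[y]+u$ lies in $\mathrm{L}^s(J_T;X)$ (for $q<\infty$ take $s=q$; for $q=\infty$ any $s\in(1,\infty)$ works since $F[y]\in\mathrm{L}^\infty(J_T;X)\hookrightarrow\mathrm{L}^s$ on the bounded interval, and $u\in\mathrm{L}^\infty\hookrightarrow\mathrm{L}^s$), because $\|F[y](s)\|_X\le M(1+\|y(s)\|_\alpha+|\mathcal{W}[Sy](s)|)$ and $\mathcal{W}[Sy]\in\mathrm{C}(\overline{J_T})$ by \eqref{hyst_growth}. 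Since $p\in\mathrm{J}\cap[2,\infty)$, $A_p$ satisfies maximal parabolic regularity on $X$ (Remark~\ref{Rem:maximal parabolic regularity}(iii)), so the mild solution of $\dot y+A_py=g$, $y(0)=0$ coincides with the unique strong solution in $Y_{s,0}$, and by Remark~\ref{Rem:maximal parabolic regularity}(ii) the solution operator $(\tfrac{d}{dt}+A_p)^{-1}$ is bounded from $\mathrm{L}^s(J_T;X)$ to $Y_{s,0}$; combined with the already-established bound on $\|g\|_{\mathrm{L}^s(J_T;X)}$ (itself controlled by $1+\|u\|_{\mathrm{L}^q(J_T;X)}$ via the a priori estimate), this upgrades both the linear bound and the local Lipschitz continuity to $Y_{s,0}$. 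The embedding $Y_q\hookrightarrow\mathrm{C}(\overline{J_T};X^\alpha)$ from Remark~\ref{Rem:embeddings} (valid since $\alpha<1-1/q$, which holds because $q>\tfrac{1}{1-\alpha}$) guarantees consistency of the two statements.
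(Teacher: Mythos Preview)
Your overall architecture is sound and the Bielecki weighted-norm idea is a legitimate alternative to the paper's strategy (which instead proves local existence on a short interval near $0$, continues by a global a~priori bound, and then concatenates). There is, however, one genuine gap.

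You write ``work on a bounded ball $B_R\subset\mathrm{C}(\overline{J_T};X^\alpha)$, use the local Lipschitz constant $L=L(R)$ of $f$ valid on that ball.'' Assumption~\ref{Ass:general_ass_and_short_notation} only gives that $f$ is locally Lipschitz in the pointwise sense (a neighbourhood of each point), \emph{not} Lipschitz on bounded subsets of $X^\alpha$; in infinite dimensions the latter does not follow from the former. The paper is explicit about this distinction and circumvents it in two places: for local existence it only needs the Lipschitz constant $L(0)$ on a small ball around the initial value $0$; for local Lipschitz continuity of $G$ it uses that for a fixed solution $y=G(u)$ the trajectory $y(\overline{J_T})\subset X^\alpha$ is \emph{compact}, covers it by finitely many balls on which $f$ is Lipschitz, and thereby obtains a uniform Lipschitz constant on a tube around that particular trajectory (see Step~3 of the paper's proof). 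Your Bielecki contraction on an arbitrary ball $B_R$ cannot be closed without such an argument, so as written the fixed-point step and the Lipschitz-of-$G$ step are incomplete. The fix is to combine your weighted-norm estimate with the compact-trajectory argument rather than with a ball of radius $R$.

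A smaller point: in the a~priori bound the right-hand side involves $\sup_{\tau\le s}\|y(\tau)\|_{X^\alpha}$ (because of \eqref{hyst_growth}), not $\|y(s)\|_{X^\alpha}$, so the singular Gronwall lemma is not directly applicable to $\|y(t)\|_{X^\alpha}$. The paper handles this by proving that $t\mapsto\int_0^t(t-s)^{-\alpha}\sup_{\tau\le s}\|y(\tau)\|_{X^\alpha}\,ds$ is monotone, so one may take $\sup_{\tau\le t}$ on both sides first. Your weighted norm actually absorbs this (since $e^{-\lambda t}\sup_{\tau\le t}\|y(\tau)\|_{X^\alpha}\le\sup_{\tau\le t}e^{-\lambda\tau}\|y(\tau)\|_{X^\alpha}$), but you should say so explicitly.

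Your final paragraph on the $Y_{s,0}$ upgrade via maximal parabolic regularity matches the paper's Step~5.
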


\begin{proof}
	We prove the theorem as in \cite[Theorem 7.1.3]{lunardi} by a fixed point argument. Several estimates can be found in \cite[Appendix A]{meyeroptimal} in a similar form. We extend the results in \cite{lunardi} and \cite{meyeroptimal} by allowing for non-linearities which are only locally Lipschitz continuous and not Lipschitz continuous on bounded sets. Moreover, non-locality of the hysteresis operator in time requires additional work in several steps.
	We prove the theorem directly for $u\in \mathrm{L}^q(J_T;X)$ as it is done in \cite[Theorem 7.1.3]{lunardi}. In \cite[Appendix A]{meyeroptimal} the corresponding statement is first shown for smooth right hand sides and afterwards extended by a density argument.
	
	In the following, $c$ always denotes a generic constant which is adapted during the proof. 
	Note that for $\beta > -1$ there holds
	\begin{align}
		\int_{0}^{t} (t-s)^\beta \,ds = \frac{t^{1+\beta}}{1+\beta}.\label{integral_computaton}
	\end{align}
	The proof is divided into five steps.
	\begin{enumerate}[leftmargin=0.5cm]
		\item We show the existence of local solutions of problem \eqref{state_eqation_abstract}. 
	
	Consider $v_u(t):= \int\limits_{0}^t e^{-A_p(t-s)} u(s) \,ds$.	$v_u$ belongs to $\mathrm{C}(\overline{J_T};X^\alpha)$ for arbitrary $T>0$.
		
	Moreover, since $q'<\alpha^{-1}$, we have by \eqref{frac_pow_estimate1} and \eqref{integral_computaton}
	\begin{align}
	\Vert v_u \Vert_{\mathrm{C}(\overline{J_T};X^\alpha)} &\leq \left(\int_{0}^{T}\|e^{-A_p(t-s)}\|^{q'}_{\mathcal{L}(X,X^\alpha)} \,ds\right)^{1/q'} \Vert v_u \Vert_{\mathrm{L}^q(J_T;X)}\notag\\
	&\leq c e^{(1-\gamma)T} T^{1/q'-\alpha}\Vert v_u \Vert_{\mathrm{L}^q(J_T;X)} <\infty\label{convol_estim}.
	\end{align}
	
	Let $\delta> 0$ be small enough so that $f$ is Lipschitz continuous in $\overline{B_{X^\alpha}(0,\delta)}\times \mathbb{R}$ with a constant $L(0)>0$.
	
	We apply Assumption~\ref{Ass:general_ass_and_short_notation} and \eqref{hyst_growth} to estimate
	\begin{align}
	\Vert &(F[y_1])(t) - (F[y_2])(t) \Vert_{X}\notag\\
	&\leq L(0) \left(\Vert y_1(t) - y_2(t) \Vert_{X^\alpha} + 2\|S\|_{[X^\alpha]^*}\sup_{0\leq \tau \leq t} \Vert y_1(\tau) - y_2(\tau) \Vert_{X^\alpha}\right)\notag\\
	&\leq c \sup_{0\leq \tau \leq t} \Vert y_1(\tau) - y_2(\tau) \Vert_{X^\alpha}\label{Est:Thm:well_posed_1}
	\end{align}
	for all $y_1,y_2\in \overline{B_{\mathrm{C}(\overline{J_T};X^\alpha)}(0,\delta)}$ and $t\in \overline{J_T}$.
	
	The mapping 
	\begin{align*}
	\Phi_u(y)(t):= \int\limits_{0}^t e^{-A_p(t-s)} \left[ f\left(y(s),\mathcal{W}[Sy](s)\right) + u(s)\right] \,ds
	\end{align*}
	is well defined on $\mathrm{C}(\overline{J_T};X^\alpha)$. This is shown as in \cite[Appendix~A~(ii)]{meyeroptimal}.
	
	For $y_1,y_2\in \overline{B_{\mathrm{C}(\overline{J_T};X^\alpha)}(0,\delta)}$ we have by \eqref{frac_pow_estimate1}, \eqref{integral_computaton} and \eqref{Est:Thm:well_posed_1} that
	\begin{align*}
	\Vert \Phi_u(y_1) - \Phi_u(y_2) \Vert_{\mathrm{C}(\overline{J_T};X^\alpha)}
	&\leq \int_{0}^{T}\|e^{-A_p(t-s)}\|_{\mathcal{L}(X,X^\alpha)}\,ds \Vert F[y_1] - F[y_2] \Vert_{\mathrm{C}(\overline{J_T};X)} \\
	&\leq ce^{(1-\gamma)T}T^{1-\alpha}\Vert y_1-y_2 \Vert_{\mathrm{C}(\overline{J_T};X)} < \frac{1}{2}\Vert y_1-y_2 \Vert_{\mathrm{C}(\overline{J_T};X)}
	\end{align*} 
	for $T$ small enough.
	
	Consequently, in this case $\Phi_u$ is a $\frac{1}{2}$-contraction.
	
	Using this result together with \eqref{frac_pow_estimate1} and \eqref{integral_computaton} we obtain for $y\in\overline{B_{\mathrm{C}(\overline{J_T};X^\alpha)}(0,\delta)}$
	\begin{align*}
	\Vert \Phi_u(y)(t)\Vert_{X^\alpha} &\leq \Vert \Phi_u(y)(t) - \Phi_u(0)(t) \Vert_{X^\alpha} + \Vert \Phi_u(0)(t)\Vert_{X^\alpha}\\
	&\leq \frac{\delta}{2} +  \left(
	\int_{0}^{T}\|e^{-A_p(t-s)}\|^{q'}_{\mathcal{L}(X,X^\alpha)}\,ds
	\right)^{1/q'}\Vert F[0]  + u\Vert_{\mathrm{L}^q(J_T;X)}\\
	&\leq \frac{\delta}{2} +  ce^{(1-\gamma)T}T^{1/q'-\alpha}\Vert f\left(0,z_0\right) + u\Vert_{\mathrm{L}^q(J_T;X)}	\leq \delta
	\end{align*}
	if $T$ is small enough.
	
	Because $\Phi_u$ then maps $\overline{B_{\mathrm{C}(\overline{J_T};X^\alpha)}(0,\delta)}$ into itself and since $\overline{B_{\mathrm{C}(\overline{J_T};X^\alpha)}(0,\delta)}$ is a closed subset of $\mathrm{C}(\overline{J_T};X^\alpha)$, Banach's fixed point theorem yields a unique fixed point $y$ of $\Phi_u$ in $\overline{B_{\mathrm{C}(\overline{J_T};X^\alpha)}(0,\delta)}$.
	
	This fixed point defines a (local) mild solution of problem \eqref{state_eqation_abstract} in $\overline{J_T}$ \cite[Definition 7.0.2]{lunardi}.
	
	\item We show that global mild solutions for problem \eqref{state_eqation_abstract} exist and boundedness of the solution mapping $G$.
	
	This part requires some cautiousness because the hysteresis operator is non-local in time.
	
	Remember that the local mild solution $y$ of \eqref{state_eqation_abstract} takes the form \eqref{Def:mild_solution}.
	With Assumption~\ref{Ass:general_ass_and_short_notation} and the second estimate in \eqref{hyst_growth} we estimate
	
	$|\mathcal{W}[Sy](t)| \leq 2\|S\|_{[X^\alpha]^*} \sup_{0\leq \tau \leq t}\|y(\tau)\|_{X^\alpha} + |z_0|$ for all $t\in \overline{J_T}$.
	
	Moreover, by \eqref{frac_pow_estimate1} there holds
	\begin{align*}
	\Vert (A_p+1)^{\alpha}\exp(-A_p t) \Vert_{\mathcal{L}(\mathbb{W}_{\Gamma_D}^{-1,p}(\Omega))}\leq C_\alpha t^{-\alpha}\exp((1-\gamma) t).
	\end{align*}
	Equation~\eqref{integral_computaton} yields
	\begin{align*}
	\left(\int_{0}^{t} (t-s)^{-\alpha q'} \,ds\right)^{1/q'} = \left(\frac{t^{1-\alpha q'}}{1-\alpha q'}\right)^{1/q'} = \frac{t^{1/q'-\alpha}}{(1-\alpha q')^{1/q'-\alpha}}.
	\end{align*}
	We combine those three observations to obtain a bound for the norm of $y(t)$ for all $t\in \overline{J_T}$ in the form
	\begin{align}
	\Vert y(t)\Vert_{X^\alpha} &\leq ce^{(1-\gamma)T}\left[\int_{0}^{t}(t-s)^{-\alpha} \left(1+3\sup_{0\leq \tau \leq s}\|y(\tau)\|_{X^\alpha} + |z_0|\right)\,ds + t^{1/q'-\alpha}\Vert u\Vert_{\mathrm{L}^q(J_T;X)}\right]\notag\\
	&\leq c_0(T)\int_{0}^{t}(t-s)^{-\alpha} \sup_{0\leq \tau \leq s}\|y(\tau)\|_{X^\alpha}\,ds + c_1(T) [1+ \Vert u\Vert_{\mathrm{L}^q(J_T;X)}],\label{key} 
	\end{align}
	where $c_0(T),c_1(T)>0$ are constants which depend on $T$ (and on $q'$ and the fixed value $\alpha$).
	As in the proof of \cite[Theorem 6.3.3]{pazy} note that if the solution of \eqref{state_eqation_abstract} exists on $[0,T[$ it can be continued as long as $\Vert y(t) \Vert_{X^\alpha}$ remains bounded with $t\uparrow T$.
	
	Clearly this is the case if 
	\begin{align}\label{bdd1}
	\sup\limits_{0\leq \tau < T} \Vert y(\tau) \Vert_{X^\alpha} \leq C(T)
	\end{align}
	for some $C(T)>0$.
	
	It is not hard to show that the function $t\mapsto \sup\limits_{0\leq \tau < t} \Vert y(\tau) \Vert_{X^\alpha}$ is continuous on $[0,T[$.

	We prove that for $t\in J_T$ the function
	\[
	g:\tau \mapsto \int\limits_{0}^{\tau} (\tau -s )^{-\alpha} \sup\limits_{0\leq \tau' \leq s} \Vert y(\tau') \Vert_{X^\alpha}  \,ds ,\ \tau \in \overline{J_t}\] 
	is monotone increasing.
	
	Let $t_0\in \overline{J_t}$ and $\delta> 0$ be given. Then by a shift of the integration interval we obtain
	\begin{align*}
	&g(t_0 + \delta)-g(t_0) \\
	&= \int\limits_{0}^{t_0 + \delta} (t_0 + \delta -s )^{-\alpha} \sup\limits_{0\leq \tau' \leq s} \Vert y(\tau') \Vert_{X^\alpha}  \,ds 
	- \int\limits_{0}^{t_0} (t_0 -s )^{-\alpha}  \sup\limits_{0\leq \tau' \leq s} \Vert y(\tau') \Vert_{X^\alpha}\\
	&=  \int\limits_{0}^{t_0} (t_0 -s )^{-\alpha} \left(\sup\limits_{0\leq \tau' \leq s + \delta} \Vert y(\tau') \Vert_{X^\alpha}  
	- \sup\limits_{0\leq \tau' \leq s} \Vert y(\tau') \Vert_{X^\alpha}\right) \,ds \\
	&+\int\limits_{0}^{\delta} (t_0 + \delta -s )^{-\alpha} \sup\limits_{0\leq \tau' \leq s} \Vert y(\tau') \Vert_{X^\alpha}  \,ds \geq 0.
	\end{align*}
	
	Because $g$ is monotone increasing we can take the supremum in \eqref{key} on both sides to get
	\begin{align*}
	&\sup\limits_{0\leq \tau \leq t} \Vert y(\tau) \Vert_{X^\alpha}\leq c_0(T)\int_{0}^{t}(t-s)^{-\alpha} \sup_{0\leq \tau \leq s}\|y(\tau)\|_{X^\alpha}\,ds + c_1(T) [1+ \Vert u\Vert_{\mathrm{L}^q(J_T;X)}].
	\end{align*}
	By Gronwall's Lemma this implies
	\begin{align*}
	\sup\limits_{0\leq \tau \leq t} \Vert y(\tau) \Vert_{X^\alpha} &\leq C(T) (1+\Vert u\Vert_{\mathrm{L}^q(J_T;X)})
	\end{align*}
	for $C(T)>0$ and for all $t\in \overline{J_T}$ \cite[Lemma 6.7]{pazy}, which proves \eqref{bdd1}.

	\item Local Lipschitz continuity of the solution mapping is shown in a similar way as global existence but we have to be careful because $f$ is only locally Lipschitz continuous and not Lipschitz continuous on bounded sets as is the case in \cite{meyeroptimal}.
	
	The function $(y(\cdot),v)\mapsto f(y(\cdot),v)$ is locally Lipschitz continuous from $\mathrm{C}(\overline{J_T};X^\alpha)\times \mathbb{R}$ to $\mathrm{C}(\overline{J_T};X)$ with respect to the $\mathrm{C}(\overline{J_T};X^\alpha)$-norm. 
	To see this, note first that the set $y(\overline{J_T})\subset X^\alpha$ is compact for given $y\in \mathrm{C}(\overline{J_T};X^\alpha)$ by continuity of $y$ and since the interval $\overline{J_T}\subset\mathbb{R}$ is a compact set. Moreover, $y(\overline{J_T})$ equipped with the subspace topology in $X^\alpha$ is separable, again because $y$ is continuous and since $\overline{J_T}$ is separable. Let $\{x_i\}_{i\in \mathbb{N}}\subset X^\alpha\cap y(\overline{J_T})$ be a dense subset of $y(\overline{J_T})$. The function $(\tilde{y},v)\mapsto f(\tilde{y},v)$ is locally Lipschitz continuous from $X^\alpha\times\mathbb{R}$ to $X$. So one can find constants $\varepsilon(x_i)>0$ such that $(\tilde{y},v)\mapsto f(\tilde{y},v)$ is Lipschitz continuous on $B_{X^\alpha}(x_i,\varepsilon(x_i))\times\mathbb{R}$. Because $\{x_i\}_{i\in \mathbb{N}}$ is dense in $y(\overline{J_T})$, it follows that the set $y(\overline{J_T})$ is contained in $\cup_{i\in I} B_{X^\alpha}(x_i,\varepsilon(x_i))$. Since $y(\overline{J_T})$ is compact in $X^\alpha$, one can find a finite subcover $\cup_{i=1}^k B_{X^\alpha}(x_i,\varepsilon(x_i))$ which still contains $y(\overline{J_T})$. Now the function $(\tilde{y},v)\mapsto f(\tilde{y},v)$ is Lipschitz continuous on $\cup_{i=1}^k B_{X^\alpha}(x_i,\varepsilon(x_i))\times \mathbb{R}$ with a modulus given by the maximum of the Lipschitz constants on $B_{X^\alpha}(x_i,\varepsilon(x_i))\times \mathbb{R}$ over all $i\in \{1,\cdots k\}$. Since $V_y:=\{ \tilde{y}\in \mathrm{C}(\overline{J_T};X^\alpha):\ y(t)\in B_{X^\alpha}(x_i,\varepsilon(x_i))\ \forall t\in \overline{J_T}\}$ is a neighbourhood of $y$ in $\mathrm{C}(\overline{J_T};X^\alpha)$, this proves that  $(\tilde{y}(\cdot),v)\mapsto f(\tilde{y}(\cdot),v)$ is Lipschitz continuous from $B_{X^\alpha}(x_i,\varepsilon(x_i))\times\mathbb{R}\subset \mathrm{C}(\overline{J_T};X^\alpha)\times\mathbb{R}$ to $\mathrm{C}(\overline{J_T};X)$, i.e. $(y(\cdot),v)\mapsto f(y(\cdot),v)$ is locally Lipschitz continuous from $\mathrm{C}(\overline{J_T};X^\alpha)\times \mathbb{R}$ to $\mathrm{C}(\overline{J_T};X)$ with respect to the $\mathrm{C}(\overline{J_T};X^\alpha)$-norm. Moreover, 
	there even holds the pointwise estimate \begin{align}\label{Eq:Lipschitz_continuity_f}
	\|f(y_1(t),v_1) - f(y_2(t),v_2)\|_X\leq L(y) (\|y_1(\tau) - y_2(\tau)\|_{X^\alpha} + |v_1-v_2|)
	\end{align} 
	for all $y_1,y_2\in V_y$, $v_1,v_2\in \mathbb{R}$ and $t\in \overline{J_T}$ and for some $L(y)>0$.
	
	Lipschitz continuity of $\mathcal{W}$, see Subsection~\ref{SUBSEC:Stop}, together with Assumption~\ref{Ass:general_ass_and_short_notation} yields that also $y\mapsto F[y]$ is locally Lipschitz continuous from $\mathrm{C}(\overline{J_T};X^\alpha)$ to $\mathrm{C}(\overline{J_T};X)$ and for $y\in \mathrm{C}(\overline{J_T};X^\alpha)$ there exists a neighbourhood $V_y$ of $y$ and a constant $L(y)>0$ such that the pointwise estimate
	\begin{align}\label{Eq:Lipschitz_continuity_F}
		\|F(y_1)(t) - F(y_2)(t)\|_X\leq L(y)\sup_{0\leq \tau \leq t} \|y_1(\tau) - y_2(\tau)\|_{X^\alpha}
	\end{align}
	holds
	for all $y_1,y_2\in V$ and $t\in \overline{J_T}$.
	Let $y=G(u)$ be the solution of problem \eqref{state_eqation_abstract} corresponding to $u$.
	Moreover, let $\delta>0$ be small enough so that $F$ is Lipschitz continuous in $\overline{B_{\mathrm{C}(\overline{J_T};X^\alpha)}(y,\delta)}$ with modulus $L(y)$.
	
	For $R>0$ to be chosen let $\tilde{u}\in \overline{B_{\mathrm{L}^q(J_T;X)}(u,R)}$ be arbitrary. There holds $y(0)=G(\tilde{u})(0)=0$.
	Continuity of $y$ and $G(\tilde{u})$ yields that we can find some $\tau>0$ such that
	\[
	\sup_{0\leq t < \tau} \Vert y(t) - G(\tilde{u})(t) \Vert_{X^\alpha} <\delta.
	\]
	
	With \eqref{frac_pow_estimate1}, \eqref{hyst_growth}, \eqref{integral_computaton}, \eqref{Eq:Lipschitz_continuity_F} and Assumption~\ref{Ass:general_ass_and_short_notation} we obtain
	\begin{align*}
	&\Vert y(t) - G(\tilde{u})(t) \Vert_{X^\alpha} \\
	&\leq ce^{(1-\gamma)T} \int\limits_{0}^{t} (t-s)^{-\alpha} [\Vert (F[y])(s)- (F[G(\tilde{u})])(s) \Vert_{X} + \|u-\tilde{u}\|_X]\,ds\\
	&\leq c(T,y) \int\limits_{0}^{t} (t-s)^{-\alpha} \sup\limits_{0\leq \tau \leq s} \Vert y(\tau) - G(\tilde{u})(\tau) \Vert_{\alpha} \,ds + c\Vert u-\tilde{u} \Vert_{\mathrm{L}^q(J_T;X)}
	\end{align*}
	for $t\in[0,\tau)$ and constants $c(T,y),c>0$.
	
	Similar as in Step 2 one can use Gronwall's Lemma to prove that there is some $C(T,y)>0$ such that
	\[
	\sup_{0\leq t \leq \tau} \Vert y(t) - G(\tilde{u})(t) \Vert_{X^\alpha} \leq C(T,y) \Vert u-\tilde{u}\Vert_{\mathrm{L}^q(J_T;X)} < \delta
	\]
	if $R$ is chosen small enough, since $\tilde{u}\in \overline{B_{\mathrm{L}^q(J_T;X)}(u,R)}$.
	Repeating the argument shows that 
	\[
	\sup_{0\leq t \leq T} \Vert y(t) - G(\tilde{u})(t) \Vert_{X^\alpha} \leq C(T,y) \Vert u-\tilde{u}\Vert_{\mathrm{L}^q(J_T;X)} < \delta
	\]
	for some appropriate $R>0$ and all $\tilde{u}\in \overline{B_{\mathrm{L}^q(J_T;X)}(u,R)}$.
	This implies that $G$ maps $\overline{B_{\mathrm{L}^q(J_T;X)}(u,R)}$ into $\overline{B_{\mathrm{C}(\overline{J_T};X^\alpha)}(y,\delta)}$ and $F$ is Lipschitz continuous on this set.
	A similar computation yields a constant $C(T,y)>0$ such that for arbitrary $u_1,u_2\in \overline{B_{\mathrm{L}^q(J_T;X)}(u,R)}$ there holds 
	\[
	\sup_{0\leq t \leq T} \Vert G(u_1)(t) - G(u_2)(t) \Vert_{X^\alpha} \leq C(T,y) \Vert u_1-u_2\Vert_{\mathrm{L}^q(J_T;X)}.
	\]
	This proves that $G$ is Lipschitz continuous in $\overline{B_{\mathrm{L}^q(J_T;X)}(u,R)}$. 
	
	So we have shown local Lipschitz continuity of $G$ from $\mathrm{L}^q(J_T;X)$ to $\mathrm{C}(\overline{J_T};X^\alpha)$.
	
	\item Uniqueness of the mild solution follows by local Lipschitz continuity of $G$ if one inserts $u_1=u_2$.
	
	\item The last statement of the theorem follows from maximal parabolic Sobolev regularity of $A_p$, cf. Remark~\ref{Rem:maximal parabolic regularity}. One applies $(\frac{d}{dt}+A_p)^{-1}$ to $F[y] + u \in \mathrm{L}^q(J_T;X)$ or to $F[y_1]-F[y_2] + u_1 - u_2 \in \mathrm{L}^q(J_T;X)$ respectively, see also \cite[Proposition 2.8]{meyeroptimal}. Note that this is the only step of the proof in which we must assume $p\in \mathrm{J}\cap [2,\infty)$ with $\mathrm{J}$ from Theorem~\ref{Thm:elliptic_regularity_for_systems}. In the previous steps also $p\in \mathrm{J}$  would have been sufficient.
\end{enumerate}
\end{proof}

\section{Hadamard directional differentiability}\label{Sec:Hadam_diff}
\subsection{Definition and properties}

We want to show differentiability of the solution mapping $G$ for problem \eqref{state_eqation_abstract}. Because the hysteresis operator is not smooth we can not expect a Fréchet derivative.
Therefore we consider a weaker form of differentiability, the Hadamard directional derivative \cite{bonnans, Brokate_weak_diff}.

To start with, we define directional differentiability of a mapping
$g:U\subset X\rightarrow Y$ from an open set $U\subset X$ of a normed vector space $X$ into a normed vector space $Y$ \cite[Definition 2.44]{bonnans}:
\begin{definition}
	Let $X,Y$ be normed vector spaces.
	We call $g$ directionally differentiable at $x\in U\subset X$ in the direction $h\in X$ if
	\begin{align*}
	g'[x;h]:= \lim\limits_{\lambda \downarrow 0} \frac{g(x+\lambda h) - g(x)}{\lambda}
	\end{align*}
	exits in $Y$. If $g$ is directionally differentiable at $x$ in every direction $h$ we call $g$ directionally differentiable at $x$.
\end{definition}

Using this definition we introduce the concept of the Hadamard directional derivative:
\begin{definition}
	If $g$ is directionally differentiable at $x\in U$ and if in addition for all functions $r:[0,\lambda_0)\rightarrow X$ with $\lim\limits_{\lambda \rightarrow 0} \frac{r(\lambda)}{\lambda}=0$
	\begin{align*}
	g'[x;h]= \lim\limits_{\lambda \downarrow 0} \frac{g(x+\lambda h + r(\lambda)) - g(x)}{\lambda}
	\end{align*}
	for all directions $h\in X$, we call $g'[x;h]$ the Hadamard directional derivative of $g$ at $x$ in the direction $h$. 
	
	Note that $g(x+\lambda h + r(\lambda))$ is only well defined if $\lambda$ is already small enough so that $x+\lambda h + r(\lambda) \in U$.
\end{definition}
We will frequently use the following properties of the concept of Hadamard directional differentiability:
\begin{lemma}\label{Lem:hadam_chain_rule}
	\cite[Proposition 2.47]{bonnans}
	Suppose that $g:U\subset X \rightarrow Y$ is Hadamard directionally differentiable at $x\in U$ and that $f:V\subset g(U) \rightarrow Z$ is Hadamard directionally differentiable at $g(x)\in V$.
	Then $f \circ g : U \rightarrow Z$ is Hadamard directionally differentiable at $x$ and
	\begin{align*}
	(f \circ g)'[x;h] = f'\left[g(x);g'[x;h]\right].
	\end{align*}
\end{lemma}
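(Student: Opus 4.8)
The plan is to deduce the statement directly from the two definitions by a remainder-substitution argument. Fix $x\in U$ and a direction $h\in X$, and write $y_0:=g(x)$ and $k:=g'[x;h]\in Y$. Since $U$ is open and, for any function $r$ with $r(\lambda)/\lambda\to 0$, one has $x+\lambda h+r(\lambda)\to x$ as $\lambda\downarrow 0$, the point $x+\lambda h+r(\lambda)$ lies in $U$ for all sufficiently small $\lambda$; by continuity of $g$ at $x$ (which follows from Hadamard directional differentiability) its image lies in the open set $V$ as well, so every composition written below is defined once $\lambda$ is small enough, and I restrict to such $\lambda$ tacitly (shrinking $\lambda_0$ if needed).

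Next, let $r\colon[0,\lambda_0)\to X$ be any function with $r(\lambda)/\lambda\to 0$, and set
\[
\tilde r(\lambda):=g\big(x+\lambda h+r(\lambda)\big)-g(x)-\lambda k\in Y .
\]
The Hadamard directional differentiability of $g$ at $x$, applied precisely to this remainder $r$, says exactly that $\tilde r(\lambda)/\lambda\to 0$; that is, $\tilde r$ is itself an admissible remainder, now for the target space $Y$ and the base point $y_0$. Substituting, $(f\circ g)(x+\lambda h+r(\lambda))=f\big(y_0+\lambda k+\tilde r(\lambda)\big)$, and therefore
\[
\frac{(f\circ g)(x+\lambda h+r(\lambda))-(f\circ g)(x)}{\lambda}
=\frac{f\big(y_0+\lambda k+\tilde r(\lambda)\big)-f(y_0)}{\lambda}.
\]

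To finish, I would invoke the Hadamard directional differentiability of $f$ at $y_0=g(x)$ in the direction $k=g'[x;h]$, applied to the admissible remainder $\tilde r$: the right-hand side of the last display converges to $f'[y_0;k]=f'[g(x);g'[x;h]]$ as $\lambda\downarrow 0$. Taking $r\equiv 0$ first shows that $f\circ g$ is directionally differentiable at $x$ with $(f\circ g)'[x;h]=f'[g(x);g'[x;h]]$, and then the displayed identity for a general admissible $r$ shows that this value is in fact the Hadamard directional derivative. Since $h\in X$ was arbitrary, this is the claim. The only point requiring care is the bookkeeping that $\tilde r$ is an admissible remainder for $f$ — this is exactly where the Hadamard (and not merely directional) differentiability of $g$ is essential, since it is precisely what controls the effect of the perturbation $r$ — together with the routine verification that the arguments stay in the open domains $U$ and $V$ for small $\lambda$; there is no deeper obstacle.
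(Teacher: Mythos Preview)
Your proof is correct and is the standard remainder-substitution argument for this chain rule. The paper does not supply its own proof of this lemma; it merely cites \cite[Proposition 2.47]{bonnans}, so there is nothing to compare against.

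One small wording point: Hadamard directional differentiability at $x$ does not by itself give full continuity of $g$ at $x$; what it does give, and what you actually use, is that $g(x+\lambda h+r(\lambda))\to g(x)$ along each admissible path (multiply the difference quotient by $\lambda$). That is precisely enough to place the argument in the open set $V$ for small $\lambda$, so the proof is unaffected.
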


\begin{lemma}\label{Lem:hadam_for_lip}
	\cite[Proposition 2.49]{bonnans}
	Suppose that $g:U\subset X \rightarrow Y$ is directionally differentiable at $x\in U$ and in addition Lipschitz continuous with modulus $c(x)$ in a neighbourhood of $x$. Then $g$  is Hadamard directionally differentiable at $x$ and $g'[x;\cdot]$ is Lipschitz continuous on $X$ with modulus $c(x)$.
\end{lemma}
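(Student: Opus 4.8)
The plan is to prove both assertions by elementary estimates in which the Lipschitz bound is used to absorb perturbation terms. Write $c = c(x)$ for the modulus and fix a neighbourhood $N \subset U$ of $x$ on which $g$ is $c$-Lipschitz. Since $U$ is open and $g$ is directionally differentiable at $x$, for each fixed direction $h$ the point $x + \lambda h$ lies in $N$ for all sufficiently small $\lambda > 0$; moreover, if $r : [0,\lambda_0) \to X$ satisfies $r(\lambda)/\lambda \to 0$ (hence in particular $r(\lambda) \to 0$), then $x + \lambda h + r(\lambda) \in N$ for small $\lambda$ as well, so all difference quotients written below are well defined.

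First I would verify Hadamard directional differentiability. Fix $h \in X$ and $r$ as above, and for small $\lambda > 0$ split
\begin{align*}
\frac{g(x + \lambda h + r(\lambda)) - g(x)}{\lambda}
= \frac{g(x + \lambda h + r(\lambda)) - g(x + \lambda h)}{\lambda} + \frac{g(x + \lambda h) - g(x)}{\lambda}.
\end{align*}
The second term tends to $g'[x;h]$ by directional differentiability, while the Lipschitz bound on $N$ gives
\begin{align*}
\left\Vert \frac{g(x + \lambda h + r(\lambda)) - g(x + \lambda h)}{\lambda} \right\Vert_Y \leq \frac{c\,\Vert r(\lambda) \Vert_X}{\lambda} \longrightarrow 0 \quad \text{as } \lambda \downarrow 0.
\end{align*}
Hence the full quotient converges to $g'[x;h]$, which is precisely the defining property of the Hadamard directional derivative.

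Second, for the Lipschitz continuity of $h \mapsto g'[x;h]$, I would take $h_1, h_2 \in X$ and note that $\frac{g(x+\lambda h_1) - g(x+\lambda h_2)}{\lambda}$ is the difference of the two quotients $\frac{g(x+\lambda h_i) - g(x)}{\lambda}$, each of which converges to $g'[x;h_i]$. By continuity of the norm and the Lipschitz bound applied to the points $x + \lambda h_1, x + \lambda h_2 \in N$,
\begin{align*}
\Vert g'[x;h_1] - g'[x;h_2] \Vert_Y = \lim_{\lambda \downarrow 0} \left\Vert \frac{g(x + \lambda h_1) - g(x + \lambda h_2)}{\lambda} \right\Vert_Y \leq \lim_{\lambda \downarrow 0} \frac{c \lambda \Vert h_1 - h_2 \Vert_X}{\lambda} = c\,\Vert h_1 - h_2 \Vert_X.
\end{align*}
There is no real obstacle in this argument; the only point deserving a word of care is the domain bookkeeping — making sure every argument of $g$ appearing above lies in the neighbourhood $N$ where the Lipschitz estimate holds — which is handled by openness of $U$ together with $r(\lambda) \to 0$. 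Alternatively, one may simply quote \cite[Proposition 2.49]{bonnans} verbatim.
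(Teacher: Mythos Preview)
Your argument is correct and is precisely the standard proof of this well-known fact. The paper does not give its own proof of this lemma at all --- it simply quotes \cite[Proposition 2.49]{bonnans} as an external result --- so there is nothing to compare beyond noting that your final sentence (``one may simply quote \cite[Proposition 2.49]{bonnans} verbatim'') is exactly what the paper does.
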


\subsection{Hadamard differentiability of the stop operator}\label{SUBSEC:Hadam_Stop}
The stop operator $\mathcal{W}$ from Subsection~\ref{SUBSEC:Stop} is Hadamard directionally differentiable as a mapping $\mathrm{C}[0,T]\rightarrow \mathrm{L}^q(0,T)$.
This follows from the corresponding result for the play operator \cite[Proposition 5.5]{Brokate_weak_diff} and because 
\[
\mathcal{P}+\mathcal{W} = \mathrm{Id}
\]
defines a scalar play operator $\mathcal{P}$
\cite[Part 1 Chapter III Proposition 3.3]{visintin2013differential}.

In \cite{Brokate_weak_diff} Hadamard directional differentiability is only proved for the case when

$[a,b]=[-r,r]$ for some $r>0$ and for the corresponding symmetrical play $\mathcal{P}_r$.

One can generalize this result by concatenating $\mathcal{P}_r$ with $r=\frac{b-a}{2}$ with the affine linear transformation
\[
\mathcal{T}:[-r,r]\rightarrow [a,b],\ \mathcal{T}:x\mapsto x+\frac{b+a}{2}.
\]
Then for piecewise monotone input functions $v$ with a monotonicity partition

$0=t_0\leq \cdots \leq t_k=T$ and for $z(t_i):=\mathcal{P}_r(\mathcal{T}(v))(t_i)$, $z(0)=z_0$ we obtain inductively
\begin{align*}
\mathcal{P}_r(\mathcal{T}(v))(t) &= \max \lbrace \mathcal{T}(v(t)) - r  , \min \lbrace \mathcal{T}(v(t)) + r, z(t_i) \rbrace\rbrace\\
& = \max \lbrace v(t) +a  , \min \lbrace v(t) + b, v(t_i) \rbrace\rbrace = \mathcal{P}(v)(t)
\end{align*}
for $t\in [t_i,t_i+1]$. By extension to more general input functions it follows $\mathcal{P}_r \circ \mathcal{T} = \mathcal{P}$.

Differentiability of $\mathcal{T}$ and Hadamard directional differentiability of $\mathcal{P}_r$ together with the chain rule yield Hadamard directional differentiability for $\mathcal{P}$ and then also for $\mathcal{W}$.

\begin{remark}\label{Rem:generalize_hyst_op}
	As already mentioned in the introduction, all our results hold if we replace the stop operator by $\mathcal{P}$ or by another hysteresis operator with appropriate properties. The main reason why we decided for $\mathcal{W}$ is the following: In Section~\ref{Sec:Optimal_contr}, we apply our results to an optimal control problem in which \eqref{state_eqation_abstract} is the state equation. We will derive an adjoint system for this problem in a forthcoming paper. This is achieved by a regularization of \eqref{stop1}-\eqref{stop3}. 
\end{remark}

\subsection{Hadamard differentiability of the solution operator for the evolution equation}

We want to prove Hadamard directional differentiability of the solution operator for problem \eqref{state_eqation_abstract}.

\begin{assumption}\label{Ass:General_for_Hadam}
	In addition to Assumption~\ref{Ass:general_ass_and_short_notation} we assume that $f$ is directionally differentiable and therefore Hadamard directionally differentiable.
\end{assumption}

The statement of the following theorem is almost equal to \cite[Theorem 3.2]{meyeroptimal}, but the proof is different due to the hysteresis operator and because our function $f$ is only locally Lipschitz continuous.

Also, in Step 2 of the following proof we show a statement which is very similar to \cite[Lemma 3.1]{meyeroptimal}, but again the proof has to be different in our setting.

\begin{theorem}\label{Thm:Solution_op_Hadamard}
	Let Assumption~\ref{Ass:General_for_Hadam} hold.
	
	For any $q\in \left(\frac{1}{1-\alpha},\infty\right)$ the solution operator $G:\mathrm{L}^q(J_T;X)\rightarrow \mathrm{C}(\overline{J_T};X^\alpha)$ of problem \eqref{state_eqation_abstract} is Hadamard directionally differentiable.
	
	Its derivative $y^{u,h}:=G'[u;h]$ at $u\in \mathrm{L}^q(J_T;X)$ in direction $h\in \mathrm{L}^q(J_T;X)$ is given by the unique mild solution $\zeta\in\mathrm{C}(\overline{J_T};X^\alpha)$ of
	\begin{alignat*}{2}
		\dot{\zeta}(t) + (A_p \zeta)(t)&= F'[y;\zeta](t) + h(t)&&\ \text{in } J_T,\\
		\zeta(0)&=0,
	\end{alignat*}
	where $F'[y;\zeta](t)=f'[(y(t),\mathcal{W}[Sy](t));(y(t),\mathcal{W}'[Sy;S\zeta](t))]$ and $y=G(u)$, see Theorem~\ref{Thm:state_equ_wellposed}.
	
	Moreover, $G'[u;h]\in Y_{q,0}$ is the Hadamard directional derivative of $G:\mathrm{L}^q(J_T;X)\rightarrow Y_{q,0}$.
	
	The mapping $h\mapsto G'[u;h]$ is Lipschitz continuous from $\mathrm{L}^q(J_T;X)$ to $\mathrm{C}(\overline{J_T};X^\alpha)$ and to $Y_{q,0}$ with a modulus of continuity $c=C(G(u),T)$.
\end{theorem}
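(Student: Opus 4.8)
The plan is to set up a Banach fixed point argument for the linearized equation and then to transfer Hadamard directional differentiability of $\mathcal{W}$ and $f$ along the solution path to $G$ via difference quotients, exploiting the local Lipschitz bounds already established in Theorem~\ref{Thm:state_equ_wellposed}. In detail, I proceed in the following steps.

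\emph{Step 1: Well-posedness of the linearized equation.} Fix $u$, set $y=G(u)$, and consider the candidate derivative equation. Since $y\in\mathrm{C}(\overline{J_T};X^\alpha)$ is fixed, the map $\mathcal{W}'[Sy;\,\cdot\,]:\mathrm{C}[0,T]\to\mathrm{L}^q(0,T)$ is positively homogeneous and Lipschitz (by Lemma~\ref{Lem:hadam_for_lip} applied to the Lipschitz operator $\mathcal{W}$, with modulus $2$ as in \eqref{hyst_growth}), and $f'[(y(t),\mathcal{W}[Sy](t));\,\cdot\,]$ is Lipschitz in its second slot with modulus $L(y)$ on the compact path $\{(y(t),\mathcal{W}[Sy](t)):t\in\overline{J_T}\}$, by the same compactness/covering argument as in Step~3 of the previous proof. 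Hence $\zeta\mapsto F'[y;\zeta]$ is Lipschitz from $\mathrm{C}(\overline{J_T};X^\alpha)$ to $\mathrm{C}(\overline{J_T};X)$ with a pointwise estimate $\|F'[y;\zeta_1](t)-F'[y;\zeta_2](t)\|_X\le c(y,S)\sup_{0\le\tau\le t}\|\zeta_1(\tau)-\zeta_2(\tau)\|_{X^\alpha}$. Then the affine map $\zeta\mapsto \int_0^t e^{-A_p(t-s)}[F'[y;\zeta](s)+h(s)]\,ds$ is a contraction on $\mathrm{C}(\overline{J_{T'}};X^\alpha)$ for $T'$ small by \eqref{frac_pow_estimate1}–\eqref{integral_computaton}, and a Gronwall argument as in Step~2 extends the unique mild solution $\zeta=\zeta^{u,h}$ to all of $\overline{J_T}$; maximal parabolic regularity (Remark~\ref{Rem:maximal parabolic regularity}) lifts it to $Y_{q,0}$. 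Linearity of the equation in $h$ together with positive homogeneity of $\mathcal{W}'$ and of $f'[\cdot;\cdot]$ in the direction gives that $h\mapsto\zeta^{u,h}$ is positively homogeneous, and the same Gronwall estimate bounds $\sup_t\|\zeta^{u,h_1}-\zeta^{u,h_2}\|_{X^\alpha}\le C(y,T)\|h_1-h_2\|_{\mathrm{L}^q(J_T;X)}$ and likewise in the $Y_{q,0}$-norm after applying $(\tfrac{d}{dt}+A_p)^{-1}$ — this is the last asserted Lipschitz bound.

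\emph{Step 2: The difference quotient converges.} Write $y_\lambda=G(u+\lambda h+r(\lambda))$ with $\|r(\lambda)\|/\lambda\to0$, and $w_\lambda:=(y_\lambda-y)/\lambda$. Subtracting the mild-solution representations,
\[
w_\lambda(t)=\int_0^t e^{-A_p(t-s)}\Big[\tfrac{1}{\lambda}\big(F[y_\lambda](s)-F[y](s)\big)+h(s)+\tfrac{r(\lambda)(s)}{\lambda}\Big]\,ds .
\]
The goal is to show $w_\lambda\to\zeta^{u,h}$ in $\mathrm{C}(\overline{J_T};X^\alpha)$. By Theorem~\ref{Thm:state_equ_wellposed} we already know $y_\lambda\to y$ in $\mathrm{C}(\overline{J_T};X^\alpha)$ and, by local Lipschitz continuity, $\|w_\lambda\|_{\mathrm{C}(\overline{J_T};X^\alpha)}$ is bounded uniformly in small $\lambda$. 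Decompose $\tfrac1\lambda(F[y_\lambda]-F[y])(s)-F'[y;\zeta^{u,h}](s)$ as (i) $\tfrac1\lambda(F[y_\lambda]-F[y])(s)-F'[y;w_\lambda](s)$, a ``remainder of differentiability'' term, plus (ii) $F'[y;w_\lambda](s)-F'[y;\zeta^{u,h}](s)$, which is Lipschitz-controlled by $\sup_{0\le\tau\le s}\|w_\lambda-\zeta^{u,h}\|_{X^\alpha}$. Plugging in and applying the monotone-$g$/Gronwall machinery from Step~2 of the previous proof yields
\[
\sup_{0\le\tau\le t}\|w_\lambda(\tau)-\zeta^{u,h}(\tau)\|_{X^\alpha}\le C(y,T)\Big(\varepsilon_\lambda+\big\|\tfrac{r(\lambda)}{\lambda}\big\|_{\mathrm{L}^q(J_T;X)}\Big),
\]
where $\varepsilon_\lambda$ bounds the term (i). Since the right side tends to $0$, this gives the Hadamard directional derivative simultaneously in $\mathrm{C}(\overline{J_T};X^\alpha)$; the $Y_{q,0}$ statement follows by applying the bounded operator $(\tfrac{d}{dt}+A_p)^{-1}$.

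\emph{Step 3: controlling the remainder term (i).} This is the main obstacle. One must show $\varepsilon_\lambda:=\sup_t\big\|\int_0^t(t-s)^{-\alpha}\big(\tfrac1\lambda(F[y_\lambda]-F[y])(s)-F'[y;w_\lambda](s)\big)\,ds\big\|\to0$. The subtlety is that $w_\lambda$ is itself $\lambda$-dependent, so one cannot directly invoke Hadamard differentiability of $f$ at a fixed direction; this is exactly why the paper remarks that a ``Lemma~3.1-type'' statement is needed and must be reproved. The idea is: since $w_\lambda$ lives in a fixed bounded (hence, after composing with the compact embedding into lower-order spaces if needed, precompact) set and $f'[(y(t),\mathcal{W}[Sy](t));\cdot]$ together with $\mathcal{W}'[Sy;\cdot]$ are Lipschitz \emph{and} Hadamard-differentiable jointly along the path, one writes $\tfrac1\lambda(f(y_\lambda(s),\mathcal{W}[Sy_\lambda](s))-f(y(s),\mathcal{W}[Sy](s)))$ and uses the definition of the Hadamard derivative of $f$ with the ``perturbed direction'' $(w_\lambda(s),\tfrac1\lambda(\mathcal{W}[Sy_\lambda]-\mathcal{W}[Sy])(s))$ — the latter converges to $(\zeta^{u,h}(s),\mathcal{W}'[Sy;S\zeta^{u,h}](s))$ uniformly in $s$ by Hadamard differentiability of $\mathcal{W}:\mathrm{C}[0,T]\to\mathrm{L}^q(0,T)$ applied to the input $Sy$ with direction $Sh$ and remainder $S(\lambda w_\lambda - \zeta^{u,h}\lambda + r(\lambda))$. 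A uniform (in $s$) smallness estimate then follows from an equicontinuity/compactness argument on the path together with the fact that Hadamard differentiability gives convergence uniform over compact sets of directions. Once $\varepsilon_\lambda\to0$ is in hand, Steps 1–2 close the proof, and Lemma~\ref{Lem:hadam_chain_rule} is not even needed directly, though it could alternatively organize the $\mathcal{W}\circ S$ and $f$ compositions.
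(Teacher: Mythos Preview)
Your Step~1 and the Gronwall machinery are fine and essentially match the paper. The genuine gap is in your decomposition in Step~2 and the attempted rescue in Step~3.

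You split $\tfrac{1}{\lambda}(F[y_\lambda]-F[y])-F'[y;\zeta]$ into (i) $\tfrac{1}{\lambda}(F[y_\lambda]-F[y])-F'[y;w_\lambda]$ and (ii) $F'[y;w_\lambda]-F'[y;\zeta]$. Term (ii) is fine, but term (i) is a difference quotient of $F$ along the \emph{moving} direction $w_\lambda$, compared to the derivative in that same moving direction. To make this small you need $w_\lambda$ to converge (Hadamard differentiability controls $\tfrac{1}{\lambda}(F[y+\lambda h_\lambda]-F[y])\to F'[y;h]$ only when $h_\lambda\to h$), and your Step~3 ultimately assumes exactly that: the ``remainder'' you feed into $\mathcal{W}'$ is $S(\lambda w_\lambda-\lambda\zeta+r(\lambda))$, which is $o(\lambda)$ only if $w_\lambda\to\zeta$. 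That is precisely what you are trying to prove, so the argument is circular. The compactness escape you sketch does not close the loop either: boundedness of $\{w_\lambda\}$ in $\mathrm{C}(\overline{J_T};X^\alpha)$ gives no precompactness in that same space, and $f'$ is only assumed Lipschitz with respect to the $X^\alpha$-norm, so dropping to a weaker topology is not available without extra hypotheses.

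The paper avoids this entirely by a different insertion: it adds and subtracts $F[y+\lambda\zeta]$ with the \emph{fixed} direction $\zeta$ already constructed in your Step~1. Then the two pieces are
\[
\Big\|\tfrac{F[y+\lambda\zeta]-F[y]}{\lambda}-F'[y;\zeta]\Big\|_X
\quad\text{and}\quad
\Big\|\tfrac{F[y+\lambda\zeta]-F[y_\lambda]}{\lambda}\Big\|_X.
\]
The first goes to zero in $\mathrm{L}^q(J_T;X)$ purely by Hadamard differentiability of $F$ at $y$ in the fixed direction $\zeta$ (this is what the paper establishes first, via the chain rule for $\tilde F$, $\mathcal{W}$ and $S$). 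The second is, by the pointwise Lipschitz estimate \eqref{Eq:Lipschitz_continuity_F}, bounded by $L(y)\sup_{0\le\tau\le s}\|\tfrac{y_\lambda-y}{\lambda}-\zeta\|_{X^\alpha}$, which is exactly the quantity to which Gronwall is applied. No moving directions, no circularity. The paper also simplifies by proving only \emph{directional} differentiability this way (no $r(\lambda)$), and then upgrades to Hadamard via Lemma~\ref{Lem:hadam_for_lip} and the local Lipschitz continuity of $G$ from Theorem~\ref{Thm:state_equ_wellposed}; you can drop the $r(\lambda)$ term from your Step~2 entirely if you adopt that route.
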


\begin{proof}
	We show the theorem in five steps.
	\begin{enumerate}[leftmargin=0.5cm]
		\item First we prove that the function $\tilde{F}: \mathrm{C}(\overline{J_T};X^\alpha)\times \mathrm{L}^q(J_T) \rightarrow \mathrm{L}^q(J_T;X)$,
		\[
		\tilde{F}:(y,v) \mapsto \left[ t\mapsto f(y(t),v(t)) \right]
		\] is Hadamard directionally differentiable.
	We want to use Lemma \ref{Lem:hadam_for_lip}.
		
		\begin{enumerate}[leftmargin=0.1cm]
			\item We show that $\tilde{F}$ is well-defined. 
			
			Since $q> 1$ we have for $x_1,x_2 \in \mathbb{R}_+$
			\[
			(x_1 + x_2)^q \leq 2^{q-1} (x_1^q + x_2^q).
			\]
			Let $(y,v) \in \mathrm{C}(\overline{J_T};X^\alpha)\times \mathrm{L}^q(J_T)$ be given.
			
			Measurability of $\tilde{F}(y,v)$ follows from measurability of $y$ and $v$ and from continuity of $f$ in both components.

			Furthermore, for a.e. $s\in J_T$ we estimate
			\begin{align*}
			\Vert f(y(s),v(s)) \Vert_{X}^q 
			&\leq M^q  (\Vert y(s)\Vert_{X^\alpha} + \vert v(s) \vert + 1)^q
			\leq M^q 2^{q-1}  (\Vert y(s) \Vert_{X^\alpha} +1)^q +  \vert v(s) \vert^q
			\end{align*}
			with $M$ from Assumption~\ref{Ass:general_ass_and_short_notation}, so that $\tilde{F}(y,v)\in \mathrm{L}^q(J_T;X)$.
			
			\item 
			We show that $\tilde{F}$ is locally Lipschitz continuous with respect to the $\mathrm{C}(\overline{J_T};X^\alpha)$-norm.
			As in Step 3 in the proof of Theorem~\ref{Thm:state_equ_wellposed} note that $(y(\cdot),v)\mapsto f(y(\cdot),v)$ is locally Lipschitz continuous from $\mathrm{C}(\overline{J_T};X^\alpha)\times \mathbb{R}$ to $\mathrm{C}(\overline{J_T};X)$ with respect to the $\mathrm{C}(\overline{J_T};X^\alpha)$-norm.
			
			For $y\in \mathrm{C}(\overline{J_T};X^\alpha)$ let $\overline{B_{\mathrm{C}(\overline{J_T};X^\alpha)}(y,\delta)}\times \mathbb{R}$ be given such that this function is Lipschitz continuous with modulus $L(y)$.
			
			Consider any $y_1,y_2\in \overline{B_{\mathrm{C}(\overline{J_T};X^\alpha)}(y,\delta)}$ and $v_1,v_2\in \mathrm{L}^q(J_T)$.
			
			By \eqref{Eq:Lipschitz_continuity_f} we obtain for a.e. $s\in J_T$
			\begin{align*}
			&\Vert \tilde{F}(y_1,v_1)(s) - \tilde{F}(y_2,v_2)(s) \Vert_{X} 
			\leq  L(y)\left[  \Vert y_1(s) - y_2(s) \Vert_{X^\alpha} + \vert v_1(s) - v_2(s) \vert \right].
			\end{align*}
			
			Minkowski's inequality and $
			\Vert y_1 - y_2 \Vert_{\mathrm{L}^q(J_T;X^\alpha)}
			\leq T^{1/q}\Vert y_1 - y_2 \Vert_{\mathrm{C}(\overline{J_T};X^\alpha)}$ yields 
			\begin{align}
			&\Vert \tilde{F}(y_1,v_1) - \tilde{F}(y_2,v_2) \Vert_{\mathrm{L}^q(J_T;X)}
			\leq L(y) \left[   T^{1/q}\Vert y_1 - y_2 \Vert_{\mathrm{C}(\overline{J_T};X^\alpha)} + \Vert v_1 - v_2\Vert_{\mathrm{L}^q(J_T)} \right]\notag\\
			&\leq L(y) (1+T^{1/q}) \left[   \Vert y_1 - y_2 \Vert_{\mathrm{C}(\overline{J_T};X^\alpha)} + \Vert v_1 - v_2\Vert_{\mathrm{L}^q(J_T)} \right].\label{Lip_non_lin}
			\end{align}
			
			\item We show that $\tilde{F}$ is directionally differentiable. 
			
			To this aim, consider $y\in \mathrm{C}(\overline{J_T};X^\alpha)$ from Step 1~(b) and any $v\in \mathrm{L}^q(J_T)$.
			
			Let $(h,l)\in \mathrm{C}(\overline{J_T};X^\alpha)\times\mathrm{L}^q(J_T)$ be arbitrary and $\lambda_0>0$ small enough so that $y+\lambda h\in \overline{B_{\mathrm{C}(\overline{J_T};X^\alpha)}(y,\delta)}$ for all $\lambda\in (0,\lambda_0]$.
			
			For each $\lambda\in (0,\lambda_0]$ we define the differential quotient  
			\[
			\tilde{F}_\lambda:=\frac{1}{\lambda} [ \tilde{F}(y + \lambda h, v + \lambda l ) - \tilde{F}(y,v) ].
			\]
			
			For a.e. $s\in J_T$ we have that
			\[
			\lim\limits_{\lambda\rightarrow 0}\tilde{F}_\lambda(s) = f'[(y(s),v(s)); (h(s),l(s)) ]\in X
			\]
			because $f$ is directionally differentiable by Assumption~\ref{Ass:general_ass_and_short_notation}.
			
			We can also estimate for a.e. $s\in J_T$ and $\lambda_0$ small enough
			\[
			\Vert \tilde{F}_\lambda(s)\Vert_{X} \leq L(y) \left[ \|h(s)\|_{X^\alpha} + \vert l(s) \vert  \right]
			\]
			and the right side is contained in $\mathrm{L}^q(J_T)$.
			
			It follows by Lebesgue's dominated convergence theorem that $\tilde{F}_\lambda$ converges to the function
			\[s\mapsto f'[(y(s),v(s)); (h(s),l(s)) ]\]
			in $\mathrm{L}^q(J_T;X)$ as $\lambda \rightarrow 0$, which implies directional differentiability of $\tilde{F}$.
			
			This step is actually analogous to the proof of \cite[Lemma 3.1]{meyeroptimal}. The other steps needed some additional work.
			
			\item By Lemma \ref{Lem:hadam_for_lip}, Steps 1(b) and 1(c) imply that $\tilde{F}$ is Hadamard directionally differentiable and that $(h,l)\mapsto \tilde{F}'[(y,v); (h,l)]$ is Lipschitz continuous.
			
		\end{enumerate}
		\item Let $F: \mathrm{C}(\overline{J_T};X^\alpha) \rightarrow \mathrm{L}^q(J_T;X)$, $(F[y])(t):=f(y(t),\mathcal{W}[S y](t))$ be defined as in Theorem~\ref{Thm:state_equ_wellposed}.
		We show that $F$ is Hadamard directionally differentiable \cite[Lemma 3.1]{meyeroptimal}.
		
		Because the identity mapping $\mathrm{Id}$ on $\mathrm{C}(\overline{J_T};X^\alpha)$ and $S:\mathrm{C}(\overline{J_T};X^\alpha)\rightarrow \mathrm{C}(\overline{J_T})$ are linear and continuous they are Fréchet differentiable with derivatives $\mathrm{Id}$ and $S$.
		
		Lemma \ref{Lem:hadam_chain_rule} together with Subsection~\ref{SUBSEC:Hadam_Stop} yields that the mapping 
		\[
		y\mapsto (y,\mathcal{W}[Sy])
		\]
		is Hadamard directionally differentiable from $\mathrm{C}(\overline{J_T};X^\alpha)$ into $\mathrm{C}(\overline{J_T};X^\alpha)\times \mathrm{L}^q(J_T)$ with derivative
		\[
		h\mapsto (h,\mathcal{W}'[Sy;Sh]).
		\]
		
		Applying Lemma \ref{Lem:hadam_chain_rule} another time and using Step 1 we conclude that $F$ is Hadamard directionally differentiable with
		\begin{align*}
		F'[y;h](t)= f'[(y(t),\mathcal{W}[Sy](t)); (h(t),\mathcal{W}'[Sy;Sh](t))]
		\end{align*}
		for $y,h\in \mathrm{C}(\overline{J_T};X^\alpha)$ and a.e. $t\in J_T$.
		
		From Step 3 in the proof of Theorem~\ref{Thm:state_equ_wellposed} we know that $F$ is locally Lipschitz continuous from $\mathrm{C}(\overline{J_T};X^\alpha)$ to $\mathrm{C}(\overline{J_T},X)$ and therefore also from $\mathrm{C}(\overline{J_T};X^\alpha)$ to $\mathrm{L}^q(J_T;X)$.
		Lemma~\ref{Lem:hadam_for_lip} implies that for any $y\in \mathrm{C}(\overline{J_T};X^\alpha)$ the mapping $h\rightarrow F'[y;h]$ is Lipschitz continuous from $\mathrm{C}(\overline{J_T};X^\alpha)$ to $\mathrm{L}^q(J_T;X)$.
		
		\item We have seen in the end of Step 2 that $F$ is locally Lipschitz continuous from $\mathrm{C}(\overline{J_T};X^\alpha)$ to $\mathrm{C}(\overline{J_T},X)$ with a pointwise estimate of the form \eqref{Eq:Lipschitz_continuity_F}.
		
		\item
		We show that for any $y\in \mathrm{C}(\overline{J_T};X^\alpha)$ and $h\in \mathrm{L}^q(J_T;X)$ the integral equation
\[
\zeta(t)=\int\limits_{0}^t e^{-A_p(t-s)} [F'[y;\zeta](s)+h(s)] \,ds
\]
has a unique solution $\zeta(h)$ in $\mathrm{C}(\overline{J_T};X^\alpha)$ and that for fixed $y$ the mapping $h\mapsto \zeta(h)$ is Lipschitz continuous with a modulus $C=C(y,T)$.				
		
		By Step 2 the function $\zeta\mapsto F'[y;\zeta]$, where $F'[y;\zeta]$ is given by 
		
		$t\mapsto f'[(y(t),\mathcal{W}[Sy](t)); (\zeta(t),\mathcal{W}'[Sy;S\zeta](t))]
		$, is Lipschitz continuous from $\mathrm{C}(\overline{J_T};X^\alpha)$ to $\mathrm{L}^q(J_T;X)$.
		
		Similar to Theorem~\ref{Thm:state_equ_wellposed}, this together with \eqref{frac_pow_estimate1} and \eqref{integral_computaton} implies that for any $0<\tilde{T}\leq T$ the function 
\begin{align*}
g:\zeta \mapsto \left[ t\mapsto \int\limits_{0}^t e^{-A_p(t-s)} [F'[y;\zeta](s) + h(s)] \,ds \right]
\end{align*}
is well-defined on $\mathrm{C}([0,\tilde{T}];X^\alpha)$ and Lipschitz continuous with a modulus of the form
\[
L(\tilde{T})=C(y)e^{(1-\gamma)T}\tilde{T}^{1/q'-\alpha}.
\]
This observation together with Gronwall's Lemma already implies the statement about Lipschitz continuity for fixed $y$, provided that the fixed point mapping $h\mapsto \zeta(h)$ is well-defined.

We show by induction that $g$ has a fixed point in $\mathrm{C}(\overline{J_T};X^\alpha)$. Let $k\in \mathbb{N}$ be large enough so that $L\left(\frac{T}{k}\right)=C(y)e^{(1-\gamma)T}\left(\frac{T}{k}\right)^{1/q'-\alpha} <\frac{1}{2}$ and set
$t_j:=\frac{jT}{k}$ for $1\leq j \leq k$.

We prove that $g$ has a fixed point in $\mathrm{C}(\overline{J_{t_1}};X^\alpha)=\mathrm{C}([0,t_1];X^\alpha)$.

To this aim we define $H(t):=\int\limits_{0}^t e^{-A_p(t-s)}h(s)\,ds$ and $N_0:=\Vert H \Vert_{\mathrm{C}(\overline{J_T}; X^\alpha)}$ and consider $g$ on $\overline{B_{\mathrm{C}(\overline{J_{t_1}};X^\alpha)}(H,N_0)}$.
Note that $g$ is a contraction on $\mathrm{C}(\overline{J_{t_1}};X^\alpha)$ so that we can apply Banach's fixed point theorem if $g$ maps $\overline{B_{\mathrm{C}(\overline{J_{t_1}};X^\alpha)}(H,N_0)}$ into itself.
%

By definition we have $g(0)=H$.

Because $L\left(\frac{T}{k}\right)<\frac{1}{2}$, for $\zeta\in \overline{B_{\mathrm{C}(\overline{J_{t_1}};X^\alpha)}(H,N_0)}$ there holds
\begin{align*}
\|g(\zeta)(t)-H(t)\|_{X^\alpha}&= \|g(\zeta)(t)-g(0)(t)\|_{X^\alpha}\leq \frac{1}{2}\|\zeta(t)\|_{X^\alpha}\\
&\leq \frac{1}{2}\|\zeta(t)-H(t)\|_{X^\alpha} + \frac{1}{2}\|H(t)\|_{X^\alpha} \leq N_0
\end{align*}
so that indeed $g$ maps $\overline{B_{\mathrm{C}(\overline{J_{t_1}};X^\alpha)}(H,N_0)}$ into itself.

We obtain a unique fixed point $\zeta_1\in \overline{B_{\mathrm{C}(\overline{J_{t_1}};X^\alpha)}(H,N_0)}$ of $g_1:=g:\mathrm{C}(\overline{J_{t_1}};X^\alpha) \rightarrow \mathrm{C}(\overline{J_{t_1}};X^\alpha)$.

Inductively, we set $N_j:=2N_{j-1}+ N_0$ for $2\leq j \leq k$ and define

$g_j: \mathrm{C}(\overline{J_{t_j}},X^\alpha) \rightarrow \mathrm{C}(\overline{J_{t_j}},X^\alpha)$ as
\[
g_j(\zeta)(t) := \left\lbrace \begin{matrix}
\zeta_{j-1}(t) & \text{if }t\in [0,t_{j-1}],\\
\zeta_{j-1}(t_{j-1}) + \int\limits_{t_{j-1}}^t e^{-A_p(t-s)} [ F'[y;\zeta](s) + h(s)]\,ds & \text{if }t\in [t_{j-1},t_j]
\end{matrix}\right.,
\]
assuming that the unique fixed point $\zeta_{j-1}$  of $g_{j-1}$ exists from the previous step.
We show that $g_j$ has a fixed point.

Note that $g_j(0)=\zeta_{j-1}(t_{j-1}) + H-H(t_{j-1})\in \mathrm{C}(\overline{J_{t_j}},X^\alpha)$ and that $g_j$ is a $\frac{1}{2}$-contraction on $\mathrm{C}(\overline{J_{t_j}};X^\alpha)$.
So we are left to show that $g_j$ maps $\overline{B_{\mathrm{C}(\overline{J_{t_j}};X^\alpha)}(H,N_j)}$ into itself.

Let $\zeta\in \overline{B_{\mathrm{C}(\overline{J_{t_j}};X^\alpha)}(H,N_j)}$ be given.
On $[0,t_{j-1}]$ we can estimate 
\[
\left\Vert g_j(\zeta)(t) - H(t)  \right\Vert_{X^\alpha} = \|\zeta_{j-1}(t) - H(t) \|\leq N_{j-1}\leq N_{j}
\]
by induction.
For $t\in [t_{j-1},t_j]$ we can estimate
\begin{align*}
\|g_j(\zeta)(t)-H(t)\|_{X^\alpha}&= \|\zeta_{j-1}(t_{j-1})-H(t_{j-1}) + g_j(\zeta)(t)-g_j(0)(t)\|_{X^\alpha}\\
&\leq \|\zeta_{j-1}(t_{j-1})-H(t_{j-1})\|_{X^\alpha} + \frac{1}{2}\|\zeta(t)\|_{X^\alpha}\\
&\leq N_{j-1} + \frac{1}{2}\|\zeta(t)-H(t)\|_{X^\alpha} + \frac{1}{2}\|H(t)\|_{X^\alpha}\\
&\leq N_{j-1}+\frac{N_j}{2} + \frac{1}{2}\Vert H \Vert_{\mathrm{C}(\overline{J_T}, X^\alpha)}\\
&= N_{j-1}+\frac{2N_{j-1}+\Vert H \Vert_{\mathrm{C}(\overline{J_T}; X^\alpha)}}{2} + \frac{1}{2}\Vert H \Vert_{\mathrm{C}(\overline{J_T}, X^\alpha)}\\
&= 2N_{j-1}+\Vert H \Vert_{\mathrm{C}(\overline{J_T}, X^\alpha)}=N_j.
\end{align*}
So indeed $g_j$ maps $\overline{B_{\mathrm{C}(\overline{J_{t_j}};X^\alpha)}(H,N_j)}$ into itself and we obtain a unique fixed point

$\zeta_j\in \overline{B_{\mathrm{C}(\overline{J_{t_j}};X^\alpha)}(H,N_j)}$ of $g_j$.

We have
\[
\zeta_2(t) = \zeta_1(t)= g(\zeta_1)(t)\]
for $t\in\overline{J_{t_1}}$ and
\[
\zeta_2(t) = \int\limits_{0}^{t_1} e^{-A_p(t-s)} [ F'[y;\zeta_1](s) +h(s)]\,ds + \int\limits_{t_1}^t e^{-A_p(t-s)} [ F'[y_0;\zeta_2](s) +h(s)]\,ds
\]
for $t\in[t_1,t_2]$ which implies
\[
\zeta_2(t)=\int\limits_{0}^t e^{-A_p(t-s)} [ F'[y;\zeta_2](s) + h(s)] \,ds = g(\zeta_2)(t)
\]
on $[0,t_2]$.

Inductively, it follows $\zeta_j=g(\zeta_j)$ for all $j\in\{1,\cdots k\}$ which shows that $\zeta=\zeta(h):=\zeta_k$ is the unique solution of the integral equation
\[
\zeta(t)=\int\limits_{0}^t e^{-A_p(t-s)} [F'[y;\zeta](s)+h(s)] \,ds
\]
in $\mathrm{C}(\overline{J_T}, X^\alpha)$, i.e. a fixed point of $g$.

		\item
		We now come to the proof of the statement of the theorem.
				
		Let any $u\in \mathrm{L}^q(J_T;X)$ be given and $y=G(u)$.
		For $h\in \mathrm{L}^q(J_T;X)$ and $\lambda>0$ we denote $y_\lambda :=G(u+\lambda h)$.
		Let $\zeta=\zeta(h)\in \mathrm{C}(\overline{J_T};X^\alpha)$ be the function from Step 4.
		
		Similar as in \cite[Theorem 3.2]{meyeroptimal} we estimate with \eqref{frac_pow_estimate1} and \eqref{integral_computaton}, Step 2 and Step 3 and for $\lambda>0$ small enough
		\begin{align*}
		&\left\|\frac{y_\lambda(t) - y(t)}{\lambda} - \zeta(t) \right\|_{X^\alpha}\\
		&\leq  C_\alpha e^{(1-\gamma)T}\int\limits_{0}^t (t-s)^{-\alpha} \left(
		\left\|\frac{ (F[y + \lambda \zeta])(s) - (F[y])(s)}{\lambda} - F'[y;\zeta](s)  \right\|_X \right.\\
		&\left.
		+
		\left\|\frac{ (F[y + \lambda \zeta])(s) - (F[y_\lambda])(s)}{\lambda}  \right\|_X
		\right) \,ds\\		
		&\leq c e^{(1-\gamma)T}
		\left( 
			t^{1/q'-\alpha} 	\left\|\frac{ F[y + \lambda \zeta] - F[y]}{\lambda} - F'[y;\zeta]  \right\|_{\mathrm{L}^q(J_T;X)}
		\right.\\
		&\left. 
		+ L(y) \int\limits_{0}^t (t-s)^{-\alpha}\sup_{0\leq \tau' \leq s}\left\|\frac{y_\lambda(\tau') - y(\tau')}{\lambda} - \zeta(\tau')  \right\|_{X^\alpha} \,ds\right).
		\end{align*}
		The first term converges to zero with $\lambda\rightarrow 0$ by Step 2.		
		The estimate of the second term holds because of \eqref{Eq:Lipschitz_continuity_F} in Step 3, which was the local Lipschitz continuity of $F$, and by local Lipschitz continuity of $G$ from $\mathrm{L}^q(J_T;X)$ to $\mathrm{C}(\overline{J_T};X^\alpha)$ (see Theorem~\ref{Thm:state_equ_wellposed}).
		
		We take the supremum $\sup_{0\leq \tau \leq t}$ on both sides and apply
		Gronwall's Lemma to see that $\frac{y_\lambda - y}{\lambda}$ converges to $\zeta$ in $\mathrm{C}(\overline{J_T};X^\alpha)$.
		So we find that $\zeta$ is the directional derivative of $G$ at $u$ in direction $h$.
		
		Local Lipschitz continuity of $G$ and Lemma~\ref{Lem:hadam_for_lip} imply that the solution mapping for problem \eqref{state_eqation_abstract} is Hadamard directionally differentiable from $\mathrm{L}^q(J_T;X)$ to $\mathrm{C}(\overline{J_T};X^\alpha)$.
		
		The statement for $Y_{q,0}$ follows with Remark~\ref{Rem:maximal parabolic regularity} just as in the proof of \cite[Theorem 3.2]{meyeroptimal}.
	\end{enumerate}
\end{proof}

\section{Application to an optimal control problem}\label{Sec:Optimal_contr}
In this section, we apply the results from Theorem~\ref{Thm:state_equ_wellposed} and Theorem~\ref{Thm:Solution_op_Hadamard} to an optimal control problem. We consider either
distributed controls in
\begin{align*}
U_1:= \mathrm{L}^2 \left(J_T;\tilde{U}_1\right) := \mathrm{L}^2 \left(J_T;[\mathrm{L}^2(\Omega)]^m\right)
\end{align*}
or Neumann boundary controls in
\begin{align*}
U_2:= \mathrm{L}^2 \left(J_T;\tilde{U}_2\right):= \mathrm{L}^2 \left(J_T; \prod_{i=1}^m \mathrm{L}^2(\Gamma_{N_i},\mathcal{H}_{d-1})\right).
\end{align*}
Moreover, we will define continuous operators
$B_i : \tilde{U}_i \rightarrow X$ for $i\in \{1,2\}$, see Assumption~\ref{Ass:General_for_control}.

%

With $u\in U_i$, Theorem~\ref{Thm:state_equ_wellposed} implies well-posedness of the following state equation:
\begin{align}
\dot{y}(t) +A_p y(t) &= f(y(t),z(t))  + B_i u(t)& \hspace{0.75cm} &\text{in } \mathbb{W}_{\Gamma_D}^{-1,p}(\Omega) \text{ for }t\in (0,T), \label{state_equ_y}\\
y(0)&= 0 && \text{in } \mathbb{W}_{\Gamma_D}^{-1,p}(\Omega),\notag\\
(\dot{z}(t)-S\dot{y}(t))(z(t)-\xi) &\leq 0 && \text{for } \xi\in [a,b] \text{ and } t\in (0,T),\label{state_equ_z}\\
z(t)&\in [a,b] && \text{for } t\in[0,T],\notag\\
z(0)&=z_0\notag.
\end{align} 
Note that \eqref{state_equ_z} implies $z=\mathcal{W}[Sy]$.
For $i\in\{1,2\}$ and given $\kappa>0$ consider the optimal control problem
\begin{align}
\min_{u \in U_i} J(y,u) &:= \frac{1}{2}  \|y-y_d\|_{U_1}^2+ \frac{\kappa}{2}\| u \|_{U_i}^2\notag\\
&= \frac{1}{2}  \int_{0}^{T}\|y(s)-y_d(s)\|_{[\mathrm{L}^2(\Omega)]^m}^2\, ds + \frac{\kappa}{2}\int_{0}^{T}\|u(s)\|_{ \tilde{U}_i}^2\, ds\label{opt_control_ control_problem}
\end{align} 
subject to \eqref{state_equ_y}, \eqref{state_equ_z}.

\begin{assumption}\label{Ass:General_for_control}
	In addition to Assumption~\ref{Ass:General_for_Hadam} we assume:
	\begin{itemize}
		\item $\alpha\in \left(0,\frac{1}{2}\right)$. This assumption is needed in the proof of Lemma~\ref{Lem:convergence_nonreg}.
		\item $B_1$ is defined by
		\[
		B_1:[\mathrm{L}^2(\Omega)]^m\rightarrow X,\ \langle B_1 u,v \rangle_{\mathbb{W}_{\Gamma_D}^{1,p'}(\Omega)}:=\int_\Omega u\cdot v\,dx, \ v\in\mathbb{W}_{\Gamma_D}^{1,p'}(\Omega).
		\]
		Since $2\geq p\left(1-\frac{1}{d}\right)$ the embeddings 
		$\mathrm{L}^2(\Gamma_{N_j},\mathcal{H}_{d-1})\hookrightarrow \mathrm{W}_{\mathrm{\Gamma_D}_j}^{-1,p}(\Omega)$ are continuous for $j\in \{1,\cdots,m\}$ \cite[Remark 5.11]{rehbergsystems}.
		
		Therefore also
		\[B_2:\prod_{j=1}^m \mathrm{L}^2(\Gamma_{N_j},\mathcal{H}_{d-1})\rightarrow X,\ \langle B_2y,v \rangle_{\mathbb{W}^{1,p'}(\Omega)}=\sum_{j=1}^m \int_{\Gamma_{N_j}}y_jv_j \,d\mathcal{H}_{d-1},\ \ v\in\mathbb{W}_{\Gamma_D}^{1,p'}(\Omega)\]
		is continuous.
		\item The desired state $y_d$ in \eqref{opt_control_ control_problem} is in $U_1$ and $\kappa>0$ is given.
	\end{itemize}
\end{assumption}

\begin{remark}	
Theorem~\ref{Thm:Solution_op_Hadamard} yields Hadamard directional differentiability of $G\circ B_i: U_i \rightarrow Y_{2,0}$ for $i\in\{1,2\}$ and $(y,z)=(G(B_i u ), \mathcal{W}[SG(B_i u)])$ solves \eqref{state_equ_y}, \eqref{state_equ_z} for $u\in U_i$.
Therefore the reduced cost function $\mathcal{J}:U_i\rightarrow \mathbb{R}$, $\mathcal{J}(u)=J(G(B_iu),u)$ is Hadamard directionally differentiable.
\end{remark}

\begin{lemma}\label{Lem:convergence_nonreg}
	Let Assumption~\ref{Ass:General_for_control} hold.
	
	Suppose that for $\{u_n\}_{n\in\mathbb{N}}\subset U_i$ it holds $u_n \rightharpoonup u$ in $U_i$ with $i\in \{1,2\}$.
	
	Then $y_n=G(B_iu_n)\rightarrow G(B_iu)$ weakly in $Y_{2,0}$ and strongly in $\mathrm{C}(\overline{J_T};X^\alpha)$ and
	
	$z_n=\mathcal{W}[Sy_n]\rightarrow \mathcal{W}[SG(B_iu)]$ weakly in $\mathrm{H}^1(J_T)$ and strongly in $\mathrm{C}(\overline{J_T})$ \cite[Lemma 2.3]{brokate2013optimal}. 
	
	If the convergence of $u_n$ is strong then $y_n\rightarrow G(B_iu)$ in $Y_{2,0}$ strongly.
\end{lemma}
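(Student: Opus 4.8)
The plan is to argue by compactness together with a subsequence--subsequence trick, and then to pass to the limit in the fixed point equation characterising the mild solution of \eqref{state_eqation_abstract}.

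First I would record the \emph{a priori} bounds. Since $u_n\rightharpoonup u$ in $U_i$, the sequence $\{u_n\}$ is bounded in $U_i$, hence $\{B_iu_n\}$ is bounded in $\mathrm{L}^2(J_T;X)$ by continuity of $B_i$. Because $\alpha<\frac12$ we have $2>\frac1{1-\alpha}$, so Theorem~\ref{Thm:state_equ_wellposed} applies with $q=2$, and the linear bound \eqref{solution_op_bdd} together with its $Y_{2,0}$ version shows that $\{y_n\}$ is bounded in $\mathrm{C}(\overline{J_T};X^\alpha)$ and in $Y_{2,0}$. As $Y_{2,0}$ is a closed subspace of the reflexive space $Y_2$, along a subsequence (not relabelled) we may assume $y_n\rightharpoonup\bar y$ in $Y_{2,0}$. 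Moreover, again since $\alpha<\frac12=1-\frac1q$ with $q=2$, one may choose $\eta$ with $\alpha<\eta<1-\frac1q$, and Remark~\ref{Rem:embeddings} then yields a \emph{compact} embedding $Y_2\hhookrightarrow \mathrm{C}(\overline{J_T};X^\alpha)$. Hence $\{y_n\}$ is relatively compact in $\mathrm{C}(\overline{J_T};X^\alpha)$, and since weak limits in $Y_2$ are unique and are transported to $\mathrm{C}(\overline{J_T};X^\alpha)$ by the continuous embedding, it follows that $y_n\to\bar y$ strongly in $\mathrm{C}(\overline{J_T};X^\alpha)$.

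Next I would pass to the limit. By continuity of $S$ on $X^\alpha$ we get $Sy_n\to S\bar y$ in $\mathrm{C}(\overline{J_T})$, and Lipschitz continuity of $\mathcal{W}$ on $\mathrm{C}(\overline{J_T})$ (Subsection~\ref{SUBSEC:Stop}) gives $\mathcal{W}[Sy_n]\to\mathcal{W}[S\bar y]$ in $\mathrm{C}(\overline{J_T})$; the non-locality of $\mathcal{W}$ in time is harmless here because the convergence is uniform on $\overline{J_T}$. Using the local Lipschitz continuity of $F:\mathrm{C}(\overline{J_T};X^\alpha)\to\mathrm{L}^2(J_T;X)$ established in the proofs of Theorem~\ref{Thm:state_equ_wellposed} and Theorem~\ref{Thm:Solution_op_Hadamard}, and the fact that $y_n$ eventually lies in a fixed neighbourhood of $\bar y$ on which $F$ is Lipschitz, we obtain $F[y_n]\to F[\bar y]$ strongly in $\mathrm{L}^2(J_T;X)$; since $B_iu_n\rightharpoonup B_iu$ in $\mathrm{L}^2(J_T;X)$, this gives $F[y_n]+B_iu_n\rightharpoonup F[\bar y]+B_iu$ weakly in $\mathrm{L}^2(J_T;X)$. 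The operator $\left(\frac{d}{dt}+A_p\right)^{-1}$ is bounded from $\mathrm{L}^2(J_T;X)$ to $Y_{2,0}$ by maximal parabolic regularity (Remark~\ref{Rem:maximal parabolic regularity}(ii)), hence weak--weak continuous, and $y_n=\left(\frac{d}{dt}+A_p\right)^{-1}(F[y_n]+B_iu_n)$ by the last statement of Theorem~\ref{Thm:state_equ_wellposed}; passing to the limit yields $\bar y=\left(\frac{d}{dt}+A_p\right)^{-1}(F[\bar y]+B_iu)$, i.e.\ $\bar y$ solves \eqref{state_eqation_abstract} with source $B_iu$, so $\bar y=G(B_iu)$ by uniqueness. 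Since this limit is independent of the chosen subsequence, every subsequence of $\{y_n\}$ has a further subsequence converging to $G(B_iu)$, hence the whole sequence satisfies $y_n\rightharpoonup G(B_iu)$ in $Y_{2,0}$ and $y_n\to G(B_iu)$ in $\mathrm{C}(\overline{J_T};X^\alpha)$. The convergence of $z_n=\mathcal{W}[Sy_n]$ then follows as before: $\mathrm{C}(\overline{J_T})$--strongly from Lipschitz continuity of $\mathcal{W}$, and $\mathrm{H}^1(J_T)$--weakly from boundedness of $\{Sy_n\}$ in $\mathrm{H}^1(J_T)$ (which holds since $\{y_n\}$ is bounded in $Y_2$ and $S$ is linear and continuous on $X$), together with weak continuity of $\mathcal{W}$ on $\mathrm{W}^{1,2}(J_T)$ and uniqueness of the limit, exactly as in \cite[Lemma 2.3]{brokate2013optimal}.

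Finally, for the strengthened conclusion, suppose $u_n\to u$ strongly in $U_i$, so $B_iu_n\to B_iu$ strongly in $\mathrm{L}^2(J_T;X)$. We have already shown $y_n\to G(B_iu)$ in $\mathrm{C}(\overline{J_T};X^\alpha)$, so local Lipschitz continuity of $F$ gives $F[y_n]\to F[G(B_iu)]$ strongly in $\mathrm{L}^2(J_T;X)$; applying the bounded operator $\left(\frac{d}{dt}+A_p\right)^{-1}$ to $F[y_n]+B_iu_n$ then yields $y_n\to G(B_iu)$ strongly in $Y_{2,0}$. The main obstacle is the compactness step in the first paragraph, namely upgrading weak $Y_{2,0}$--convergence to strong $\mathrm{C}(\overline{J_T};X^\alpha)$--convergence; this is precisely where the hypothesis $\alpha<\frac12$ is used, since it is what makes the embedding $Y_2\hookrightarrow \mathrm{C}(\overline{J_T};X^\alpha)$ compact. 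Once the sup-norm convergence is available, controlling the hysteresis and nonlinear terms uniformly in time is routine.
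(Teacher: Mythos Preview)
Your proposal is correct and follows essentially the same route as the paper: boundedness of $\{y_n\}$ in $Y_{2,0}$ via the solution bound, extraction of a weakly convergent subsequence, upgrade to strong $\mathrm{C}(\overline{J_T};X^\alpha)$ convergence by the compact embedding of Remark~\ref{Rem:embeddings} (this is exactly where $\alpha<\tfrac12$ enters), passage to the limit in the state equation using local Lipschitz continuity of $F$ and weak continuity of the linear parabolic part, identification of the limit by uniqueness, and the subsequence--subsequence argument for the whole sequence. The only cosmetic differences are that the paper passes to the limit directly in $\tfrac{d}{dt}y_n+A_py_n=F[y_n]+B_iu_n$ rather than through $(\tfrac{d}{dt}+A_p)^{-1}$, and obtains the $\mathrm{C}(\overline{J_T})$ convergence of $z_n$ from weak $\mathrm{H}^1$ convergence of $\mathcal{W}[Sy_{n_k}]$ (via weak continuity of $\mathcal{W}$) rather than from Lipschitz continuity of $\mathcal{W}$ on $\mathrm{C}(\overline{J_T})$ as you do; both variants are equivalent here.
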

\begin{proof}
	The proof is a combination of the proofs for \cite[Lemma 2.10]{meyeroptimal} and \cite[Lemma 2.3]{brokate2013optimal}.
	
	Let $u_n\rightharpoonup u$ in $U_i$.
	
	By Assumption~\ref{Ass:General_for_control} we have $\alpha \in (0,\frac{1}{2})$ so that $\frac{1}{1-\alpha}<2=q$. We can therefore use Theorem~\ref{Thm:state_equ_wellposed} and Theorem~\ref{Thm:Solution_op_Hadamard} with $u$ and $h$ replaced by $B_iu$ and $B_ih$ and with $\mathrm{L}^2(J_T;X)$ replaced by $U_i$. 
	By Remark~\ref{Rem:maximal parabolic regularity} and \eqref{solution_op_bdd} there exists some $c>0$ such that
	\begin{align*}
	\|y_n\|_{Y_{2,0}}&\leq \left\|\left(\frac{d}{dt} + A_p\right)^{-1}\right\|_{\mathcal{L}(\mathrm{L}^2(J_T;X),Y_{2,0})}(\|B_iu_n\|_{\mathrm{L}^2(J_T;X)} + \|F[y_n]\|_{\mathrm{L}^2(J_T;X)})\\ &\leq c(1+\|B_iu_n\|_{\mathrm{L}^2(J_T;X)})
	\end{align*}
	so that a subsequence $y_{n_k}$ weakly converges in $Y_{2,0}$ to some $y$.
	By Remark~\ref{Rem:embeddings} we know that $Y_{2,0}$ is compactly embedded into $\mathrm{C}(\overline{J_T};X^\alpha)$ so that the convergence is strong in this space.
	We also have that $Sy_{n_k}$ converges weakly to $Sy$ in  $\mathrm{H}^1(J_T)$ because $S\in X^*$.
	
	From Subsection~\ref{SUBSEC:Stop} we know that $\mathcal{W}$ is weakly continuous on $\mathrm{H}^1(J_T)$ so that weak convergence of $Sy_{n_k}$ implies weak convergence of $z_{n_k}$ to 
	$\mathcal{W}[Sy]=z$ in $\mathrm{H}^1(J_T)$ and then also strong convergence in $\mathrm{C}(\overline{J_T})$.
%
	
	Weak continuity of $\frac{d}{dt}$, $A_p$ and $B_i$ yields
	\[\frac{d}{dt}y_{n_k} + A_p y_{n_k} \rightharpoonup \frac{d}{dt}y + A_p y \text{ and } B_i u_{n_k} \rightharpoonup B_i u \text{ in } \mathrm{L}^2(J_T;X)\]
	\cite[Lemma 2.10]{meyeroptimal}.
	
	For $n_k$ large enough we obtain by strong convergence of $y_{n_k}$ in $\mathrm{C}(\overline{J_T};X^\alpha)$ and by local Lipschitz continuity of $f$
	\begin{align*}
	\|f(y_{n_k},z_{n_k})-f(y,z)\|_{\mathrm{C}(\overline{J_T};X)}\leq L(y)(\|y_{n_k}-y\|_{\mathrm{C}(\overline{J_T};X^\alpha)}+ \|z_{n_k}-z\|_{\mathrm{C}(\overline{J_T})})
	\end{align*}
	so that $f(y_{n_k}(\cdot),z_{n_k}(\cdot))$ converges to $f(y(\cdot),z(\cdot))$ in $\mathrm{C}(\overline{J_T};X)$.
	
	We pass to the limit in \eqref{state_eqation_abstract} and conclude that $y=G(B_iu)$ and $z=\mathcal{W}[Sy]$.
	Uniqueness of the limit implies (weak) convergence of the whole sequence.
	
	The statement about strong convergence if $\{u_n\}$ converges to $u$ strongly in $U_i$ follows because in this case
	\begin{align}
	\|y_n - y\|_{Y_{2,0}}
	&\leq \left\|\left(\frac{d}{dt} + A_p\right)^{-1}\right\|_{\mathcal{L}(\mathrm{L}^2(J_T;X),Y_{2,0})}
	\left(
		\|B_i(u_n-u)\|_{\mathrm{L}^2(J_T;X)} + \|F[y_n] - F[y] \|_{\mathrm{L}^2(J_T;X)}
	\right)\label{estim_Lem:convergence_nonreg}
	\end{align} 
	and since the right side then converges to zero.
\end{proof}

\begin{theorem}\label{Thm:exist_opt_control}
	Let Assumption~\ref{Ass:General_for_control} hold.
	Then for $i\in \{1,2\}$, there exists an optimal control $\overline{u}\in U_i$ for the optimal control problem \eqref{state_equ_y}-\eqref{opt_control_ control_problem}. This means that $\overline{u}$, together with the optimal state $\overline{y}=G(\overline{u})$, which solves \eqref{state_equ_y}, are a solution of the minimization problem \eqref{opt_control_ control_problem}. The solution of \eqref{state_equ_z} is given by $\overline{z}=\mathcal{W}[S\overline{y}]$.
\end{theorem}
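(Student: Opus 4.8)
The plan is to run the direct method of the calculus of variations, using Lemma~\ref{Lem:convergence_nonreg} to handle the nonlinear, hysteretic state equation. First observe that $\mathcal{J}(u):=J(G(B_iu),u)\ge 0$ for every $u\in U_i$ and that the admissible set is nonempty (for instance $u\equiv 0$ is admissible, with state $G(0)$ from Theorem~\ref{Thm:state_equ_wellposed}), so the reduced functional $\mathcal{J}$ has a finite, nonnegative infimum $\mu:=\inf_{u\in U_i}\mathcal{J}(u)$. I would then choose a minimizing sequence $\{u_n\}_{n\in\mathbb{N}}\subset U_i$ with $\mathcal{J}(u_n)\to\mu$. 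Since $\kappa>0$ and $\tfrac{\kappa}{2}\|u_n\|_{U_i}^2\le \mathcal{J}(u_n)$, the sequence $\{u_n\}$ is bounded in the Hilbert space $U_i$; by reflexivity, after passing to a subsequence (not relabelled) we get $u_n\rightharpoonup\overline{u}$ weakly in $U_i$ for some $\overline{u}\in U_i$.

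Next I would invoke Lemma~\ref{Lem:convergence_nonreg}, which applies because Assumption~\ref{Ass:General_for_control} gives $\alpha\in(0,\tfrac12)$, hence $\tfrac{1}{1-\alpha}<2$: it yields $y_n:=G(B_iu_n)\to \overline{y}:=G(B_i\overline{u})$ strongly in $\mathrm{C}(\overline{J_T};X^\alpha)$ and weakly in $Y_{2,0}$, and $z_n:=\mathcal{W}[Sy_n]\to\overline{z}:=\mathcal{W}[S\overline{y}]$ strongly in $\mathrm{C}(\overline{J_T})$. In particular $(\overline{y},\overline{z})$ solves the state system \eqref{state_equ_y}, \eqref{state_equ_z} with control $\overline{u}$, so the pair $(\overline{u},\overline{y})$ is feasible for \eqref{opt_control_ control_problem}.

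It then remains to show $\mathcal{J}(\overline{u})\le\mu$ by weak lower semicontinuity of the cost. The control cost $u\mapsto\tfrac{\kappa}{2}\|u\|_{U_i}^2$ is convex and continuous on the Hilbert space $U_i$, hence weakly sequentially lower semicontinuous, so $\tfrac{\kappa}{2}\|\overline{u}\|_{U_i}^2\le\liminf_n\tfrac{\kappa}{2}\|u_n\|_{U_i}^2$. For the tracking term one has the continuous embedding $Y_{2,0}\hookrightarrow \mathrm{L}^2(J_T;\mathbb{W}_{\Gamma_D}^{1,p}(\Omega))\hookrightarrow \mathrm{L}^2(J_T;[\mathrm{L}^p(\Omega)]^m)\hookrightarrow U_1$, since $\mathrm{dom}(A_p)$ is topologically equivalent to $\mathbb{W}_{\Gamma_D}^{1,p}(\Omega)$ (Remark~\ref{Rem:frac_powers}), $p\ge 2$ and $\Omega$ is bounded; therefore $y\mapsto\tfrac12\|y-y_d\|_{U_1}^2$ is convex and continuous on $Y_{2,0}$, hence weakly sequentially lower semicontinuous, and the weak convergence $y_n\rightharpoonup\overline{y}$ in $Y_{2,0}$ gives $\tfrac12\|\overline{y}-y_d\|_{U_1}^2\le\liminf_n\tfrac12\|y_n-y_d\|_{U_1}^2$. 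Adding the two inequalities and using superadditivity of $\liminf$ yields $\mathcal{J}(\overline{u})=J(\overline{y},\overline{u})\le\liminf_n J(y_n,u_n)=\lim_n\mathcal{J}(u_n)=\mu$, whence $\mathcal{J}(\overline{u})=\mu$. Thus $\overline{u}$ is an optimal control, with optimal state $\overline{y}=G(B_i\overline{u})$ solving \eqref{state_equ_y} and $\overline{z}=\mathcal{W}[S\overline{y}]$ solving \eqref{state_equ_z}.

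I expect no serious obstacle here: the genuinely delicate point, the weak-to-strong continuity of the time-nonlocal, only locally Lipschitz control-to-state map (simultaneously passing to the limit through the hysteresis $\mathcal{W}$ and through $f$), is exactly the content of Lemma~\ref{Lem:convergence_nonreg} and is already available. The only thing that needs a moment of care in this proof is checking that the tracking functional is weakly lower semicontinuous along the minimizing sequence, which reduces to the embedding $Y_{2,0}\hookrightarrow U_1$ noted above; alternatively one could argue directly from the strong convergence provided by Lemma~\ref{Lem:convergence_nonreg}. Note that the Hadamard differentiability of $\mathcal{J}$ from Theorem~\ref{Thm:Solution_op_Hadamard} is not needed for existence; it is reserved for deriving optimality conditions.
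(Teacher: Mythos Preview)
Your argument is correct and is exactly the direct method the paper has in mind: the paper's own proof merely says that it uses Lemma~\ref{Lem:convergence_nonreg} and is analogous to \cite[Proposition~2.11]{meyeroptimal}, which is precisely the minimizing-sequence/weak-compactness/weak-lower-semicontinuity scheme you wrote out. Your check of the embedding $Y_{2,0}\hookrightarrow U_1$ for the tracking term is the right way to make the weak lower semicontinuity precise.
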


\begin{proof}
The proof uses Lemma~\ref{Lem:convergence_nonreg} and is analogous to the proof of \cite[Proposition 2.11]{meyeroptimal}.
\end{proof}

\begin{remark}
	In a forthcoming paper we derive an adjoint system and optimality conditions for problem \eqref{state_equ_y}-\eqref{opt_control_ control_problem}.
	The differences between the control problem for $U_1$ and $U_2$ will become obvious during this analysis.
	We will first derive optimality conditions for problem \eqref{state_equ_y}-\eqref{opt_control_ control_problem} with either distributed or boundary controls, i.e. $i\in\{1,2\}$.
	Since $B_1$ has dense range we are able to improve those for $i=1$. We can also show uniqueness of the adjoint system for the case of distributed controls.
\end{remark}

\section*{Acknowledgement}
The author is supported by the DFG through the International Research Training Group IGDK 1754 „Optimization and Numerical Analysis for Partial Differential Equations with Nonsmooth Structures".
The author would like to thank Prof. Brokate from the Technical University of Munich and Prof. Fellner from the Karl-Franzens University of Graz for thoroughly proofreading the manuscript, as well as Dr. Joachim Rehberg from the Weierstrass Institute in Berlin for the helpful discussions. 

\newpage
\bibliography{Literatur.bib}
\bibliographystyle{plain}
\end{document}